\theoremstyle{plain}
\newtheorem{theorem}{Theorem}[section]
\newtheorem{lemma}[theorem]{Lemma}
\newtheorem{proposition}[theorem]{Proposition}
\newtheorem*{conjecture*}{Conjecture}
\newtheorem*{challenge*}{Open Problem}
\theoremstyle{definition}
\newtheorem{definition}[theorem]{Definition}
\newtheorem*{definition*}{Definition}
\newtheorem{example}[theorem]{Example}
\theoremstyle{remark}
\newtheorem*{remark}{Remark}
\numberwithin{equation}{section}
\newcommand{\R}{\mathbb R}
\newcommand{\Z}{\mathbb Z}
\newcommand{\C}{\mathbb C}
\def\H{\mathbb H}
\newcommand{\Q}{{\mathbb Q}}
\def\SL{\rm SL}
\def\({\left(}
\def\){\right)}
\newcommand{\ol}[1]{\overline{{#1}}}
\newcommand{\abs}[1]{\left|#1\right|}
\newcommand{\bC}{{\mathbb C}}
\newcommand{\bQ}{{\mathbb Q}}
\newcommand{\bR}{{\mathbb R}}
\newcommand{\bZ}{{\mathbb Z}}
\newcommand{\cF}{{\mathcal F}}
\newcommand{\cG}{{\mathcal G}}
\newcommand{\cS}{{\mathcal S}}
\newcommand{\fa}{{\mathfrak a}}
\def\k2{\frac{k}{2}}
\begin{document}

\title[renormalization and quantum modular forms, part II]{
Renormalization and quantum modular forms, Part II: Mock theta functions}
\author{Yingkun Li}
\address{UCLA Mathematics Department, Box 951555, Los Angeles, CA 90095-1555, USA}
\email{yingkun@math.ucla.edu}

\author{Hieu T. Ngo}
\address{Dept. of Mathematics, University of Michigan, Ann Arbor, MI 48109-1043, USA}
\email{hieu@umich.edu}

\author{Robert C. Rhoades}
\address{Department of Mathematics, Stanford University, Stanford, CA.
94305}
\curraddr{Center for Communications Research,
Princeton, NJ 08540}
\email{rhoades@math.stanford.edu, rob.rhoades@gmail.com}


\thispagestyle{empty} \vspace{.5cm}
\begin{abstract}
Sander Zwegers showed that Ramanujan's mock theta functions are
$q$-hypergeometric series, whose $q$-expansion coefficients are half
of the Fourier coefficients of a non-holomorphic modular form. George
Andrews, Henri Cohen, Freeman Dyson, and Dean Hickerson found a pair
of $q$-hypergeometric series each of which contains half of the
Fourier coefficients of Maass waveform of eigenvalue $1/4$.

This series of papers shows that a $q$-series construction, called
``renormalization'', yields the other half of the Fourier coefficients
from a series which contains half of them.  This construction unifies
examples associated with mock theta functions and examples associated
with Maass waveforms.  Thus confirming a conviction of Freeman Dyson.
This construction is natural in the context of Don Zagier's quantum
modular forms. Detailed discussion of the role quantum modular forms
play in this construction is given.

New examples associated to Maass waveforms are given in Part I.  Part
II contains new examples associated with mock theta functions, and
classical modular forms.  Part II contains an extensive survey of the
``renormalization'' construction.  A large number of examples and open
questions which share similarities to the main examples, but remain
mysterious, are given.

\end{abstract}

\maketitle


\section{Introduction}\label{sec:intro}\label{sec:Intro}
Two $q$-hypergeometric series which lie at the interface with algebraic number theory are  
\begin{align}
\sigma(q) :=& \sum_{n= 0}^\infty S(n) q^n =  \sum_{n=0}^\infty \frac{q^{\frac{1}{2}n(n+1)}}{(-q;q)_n}
= 1 + q - q^2 + \cdots + 2q^{55} + q^{57}  + \cdots + 6 q^{1609} + \cdots 
\label{eqn:sigma}  
\\
\sigma^*(q):= &  \sum_{n=0}^\infty S^*(n) q^n = 2 \sum_{n=1}^\infty \frac{(-1)^n q^{n^2}}{(q;q^2)_n}
= -2q -2q^{2} + \cdots + 2q^{66} - 2q^{67} - 4q^{70} + \cdots 
\label{eqn:sigma*}
\end{align}
where $(x;q):= \prod_{j=0}^{n-1} (1-xq^j)$ and $(x;q)_\infty:= \prod_{j=0}^\infty (1-xq^j)$
and the expansion is valid for $\abs{q} < 1$. 
The first of these series was investigated by Ramanujan \cite{andrewsEuler}. 
 The connection to algebraic number theory was discovered by Andrews, Dyson,  and Hickerson 
 \cite{ADH} and Cohen \cite{cohen}. 
Andrews, Dyson, and Hickerson 
showed that 
$$S(n) = T(24n+1)  \ \ \ \text{ and } \ \ \ S^*(n) = T(1-24n)$$
where $T(n)$ are the coefficients of a Maass waveform 
$$\varphi_0(z):= 
 \sqrt{y} 
\sum_{n\in 24\Z+1} T(n) K_0(2\pi \abs{n} y/24) e^{2\pi i n x/24},$$
which satisfies the modular properties
$$\varphi_0(-1/2z) = \ol{\varphi_0(z)} \ \ \ \text{ and } \ \ \ \varphi_0(z+1) = e^{2\pi  i/24} \varphi_0(z),$$ 
where $K_0$ is a $K$-Bessel function and $z := x+iy$ with $x, y\in \R$ such that  $y>0$.
Thus, the positive Fourier coefficients of the non-holomorphic modular form $\varphi_0(z)$ are encoded by 
$\sigma(q)$, whereas the negative Fourier coefficients are encoded by $\sigma^*(q)$.

Dyson \cite{dysonGarden} wrote the following:
\begin{quote}\emph{
This pair of functions $\sigma(q)$ and $\sigma^*(q)$ is today an isolated curiosity. 
But I am convinced that, like so many other beautiful things in Ramanujan's garden, it will
turn out to be a special case of a broader mathematical structure. 
There probably
exist other sets of two or more functions with coefficients related by cross multiplicativity,
satisfying identities similar to those which Ramanujan discovered
for his $\sigma(q)$. I have a hunch that such sets of cross-multiplicative functions will
form a structure within which the mock theta-functions will also find a place. But
this hunch is not backed up by any solid evidence. 
}
\end{quote}

This paper gives a construction which explains how the
$q$-hypergeometric series defining mock theta functions reveal all of
the Fourier coefficients necessary to complete the (non-holomorphic)
modular form. The construction requires a ``renormalized'' version of
the $q$-series and the theory of ``quantum modular forms''.  The
renormalization identities are the types of identities referred to by
Dyson in the above quote.  In Part I of this series of papers
renormalization was used to produce the other half of the Fourier
coefficients of a Maass wave form from a series which contained half
of them.  Thus, these works confirm Dyson's conviction.

Instead of completing the modular object by passing through
non-holomorphic modular forms (see Section \ref{sec:mocktheta} for
details), we construct a pair of holomorphic functions.  One of the
two functions is defined on the Poincar\'e upper half-plane $\H$,
while the other is defined on $\H^- = \{ x+iy: x, y\in \R, y <
0\}$. Together the pair of functions encode all of the Fourier
coefficients. Moreover, the series agree on a subset of $S \subset
\Q$.  This subset of points lets the function ``leak'' from $\H$ to
$\H^-$.  The function on $S$ is a quantum modular form. A new feature
of these works is the relationship between the subset $S$ and the
cuspidality of an associated modular form (the Zagier shadow of the
mock theta function, see Section \ref{sec:mocktheta}).

Section \ref{sec:Results} contains new examples concerning weight $3/2$ mock theta
functions and holomorphic modular forms.  An extensive survey of the
renormalization construction is given in Section \ref{sec:Recurrence}.
A number of open problems and remaining mysterious are discussed in
Section \ref{sec:Other}. These sections should be valuable for future
investigation into these ideas. The remainder of this section gives
definitions and examples of mock theta functions, quantum modular
forms, and ``renormalization''.

\subsection{Mock Theta Functions}\label{sec:mocktheta}
As a result of Zwegers's thesis \cite{zwegers}, we understand that
Ramanujan's mock theta functions contain ``half'' of the Fourier
coefficients of a non-holomorphic modular form.  To be precise, each
of Ramanujan's seventeen examples are instances of Zagier's
\cite{zagierBourbaki, DMZ} definition (see also Ono's survey
\cite{Ono3}).
\begin{definition}
A \emph{mock theta function} is a $q$-series $H(q) = \sum_{n=0}^\infty a_n q^n$ such that
 there exists a rational number $\lambda \in \Q$ and a unary theta function $g(z) = 
 \sum_{n\in \Q^+} b_n q^n$ of weight $k$, 
 such that 
 $$\varphi_H(z) = q^{\lambda} H(q) - g^*(z),$$ with $q := e^{2\pi i z}$, 
 is a non-holomorphic  modular form of weight $2-k$, where 
$$g^*(z) = (4\pi)^{1-k} \sum_{n\in \Q^+} n^{2-k} \overline{b_n} \Gamma\( 2-k, 4\pi ny\) q^{-n}$$
with $\Gamma(w, t) = \int_t^\infty u^{w-1} e^{-u} du$, the incomplete Gamma function.  
The modular form $g$ is called the \emph{Zagier shadow} of $f$.
\end{definition}


For example, the Ramanujan's third order mock theta function is
\begin{equation}\label{eqn:f_def}
f(q):= \sum_{n=0}^\infty \frac{q^{n^2}}{(-q;q)_n^2}
= 1 + q - 2q^2 + 3q^3 - 3q^4 + 3q^5 -5q^6 + 7q^7 - 6q^8 + \cdots -18520 q^{100} + \cdots 
\end{equation}
Its Zagier shadow is the 
the weight $3/2$ unary theta function 
\begin{equation}\label{eqn:f_shadow}
g(z) =  \frac{1}{\sqrt{6}} 
\sum_{n\equiv 1\pmod{6}} n q^{\frac{n^2}{24}} = \frac{1}{\sqrt{6}} \sum_{n=1}^\infty \( \frac{-12}{n}\) n q^{\frac{n^2}{24}}.
\end{equation}
More precisely, the function  
$$\phi_f(z) = q^{-\frac{1}{24}} f(q)  - g^*(z), $$
with 
  \begin{equation}\label{eqn:R3}
g^*(z) =   \frac{2}{\sqrt{\pi}} \sum_{n=1}^\infty \(\frac{-12}{n}\)\Gamma\(\frac{1}{2}, \frac{\pi n^2 y}{6}\) q^{\frac{-n^2}{24}}
\end{equation}
is a weight $\frac{1}{2}$ non-holomorphic modular form with respect to $\Gamma(2)$ (see  \cite{zagierBourbaki} page 07).
In particular it satisfies
$$ \frac{1}{\sqrt{2z+1}} \phi_f\( \frac{z}{2z+1}\) =  e^{\frac{2\pi i }{12}}  \phi_f(z) \ \ \ \text{ and } \ \ \ 
\phi_f( z+ 1) = e^{-\frac{2\pi i }{24}} \phi_f(z).$$

\subsection{Quantum Modular Forms}\label{subsec:quantum}
One of the most famous modular forms is the Dedekind $\eta$-function 
$$\eta(z) := q^{\frac{1}{24}} \prod_{n=1}^\infty (1-q^n) \ \ \text{ where } \ \ q= e^{2\pi i z}$$
which satisfies the modular identities  
$$\eta(z+1 ) = e^{\frac{\pi i }{12}} \eta(z) \ \ \ \text{ and } \ \ \ \eta\( -\frac{1}{z}\) = \sqrt{-iz} \eta(z)$$
for all $z = x+iy$ with $x, y\in \R$ and $y>0$.  Thus $\eta(z)$ is a weight $1/2$ modular form. 
Euler's famous pentagonal number theory gives 
$$\eta(z) = \sum_{n>0} \(\frac{12}{n} \) q^{\frac{n^2}{24}} = q^{\frac{1}{24}} \( 1 - q - q^2 + q^5 + q^7 - \cdots \),$$
where $\( \frac{\cdot}{\cdot}\)$ is the Kronecker symbol. 

In a 1997 lecture at Max Planck, Maxim Kontsevich defined  
\begin{equation}
F(q) := 1 + (1-q) + (1-q)(1-q^2) + \cdots = \sum_{n=0}^\infty (q;q)_n
\end{equation}
Kontsevich observed that while the series diverges for both $\abs{q} < 1$ and $\abs{q}>1$, and also for most points on the unit circle, it terminates (and hence converges!) at all roots of unity. 

Zagier proved the ``strange identity''  \begin{equation}\label{eqn:identity}
q^{\frac{1}{24}} F(q) = -\frac{1}{2} \sum_{n>0} n \(\frac{12}{n}\) q^{\frac{n^2}{24}} 
 = -\frac{1}{2} q^{\frac{1}{24}}\( 1 - 5q - 7q^2 + 11 q^5 + 13q^{7} + \cdots \) =: -\frac{1}{2} \widetilde{\eta}(z)
 \end{equation}
 where the identity holds as an asymptotic expansion at each root of unity.  The function 
 $\widetilde{\eta}(z)$ is an ``Eichler integral'' or ``half-derivative'' of the modular form $\eta(z)$. 

Define $\varphi(\alpha):= e^{2\pi i \alpha} F(e^{2\pi i \alpha})$ for $\alpha \in \Q$. Zagier, using \eqref{eqn:identity},
showed that $\varphi(\alpha)$ satisfies the almost-modular identities
$$\varphi(\alpha+1) = e^{\frac{\pi i }{12}} \varphi(\alpha) \ \ \text{ and } \ \ 
\varphi(\alpha) + (i\alpha)^{-\frac{3}{2}} \varphi\(-\frac{1}{\alpha}\) = \frac{\sqrt{3i}}{2\pi} \int_0^\infty (z-\alpha)^{-\frac{3}{2}} \eta(z) dz =: g(\alpha).$$
The function $g:\R \to \C$ is a $C^\infty$ function which is real-analytic everywhere except at $\alpha = 0$ (see  
the theorem of Section 6 of \cite{zagierStrange}).

This example, and a few others from quantum invariants of 3-manifolds, led Zagier to introduce a notion of 
``quantum modular form'' \cite{zagierQuantum}.  Quantum modular forms are functions from $\Q \to \C$ which have 
a nice behavior under the action of $\SL_2(\Z)$ (or a subgroup thereof) on $\Q$.

\begin{definition}
\label{def:quantum}
Let $k\in \frac{1}{2} \Z$ , $S \subset \bQ$ a dense subset, $\Gamma
\subset SL_2(\bZ)$ a subgroup of finite index and $\chi: \Gamma
\longrightarrow \bC^\times$ a finite order character.  A function $f:
S \longrightarrow \bC$ is a \emph{quantum modular form} of weight $k$
and character $\chi$ with respect to $\Gamma$ if for every $\gamma \in
\Gamma$, the period function $h_\gamma:\R \to \C$ given by
\begin{equation}
\label{eq:period_function}
h_\gamma(x): = \chi(\gamma) f(x)  - (f \mid_{k, \gamma}) (x) 
\end{equation}
is $C^\infty$ on $\bR$ and real-analytic at all but a finite set of points.
Here $\mid_{k, \gamma}$ is the weight $k$ slash operator 
$$
(f \mid_{k, \gamma}) (x) = f(\gamma x)(cx+d)^{-k}
$$
when $\gamma=\begin{bmatrix} a & b \\ c & d\end{bmatrix}$.
\end{definition}

\begin{remark}
Recently, many authors have explored the connection between quantum modular forms and ``Ramanujan's radial limits'' associated to his mock theta functions. See the work of Folsom, Ono and the third author \cite{FOR}, as well as the works of Mortenson \cite{mortenson2} and Zudilin \cite{zudilin}.  
\end{remark}

\subsection{Renormalization}\label{sec:RenormalizationIntro}
This section defines the  $q$-series construction called \emph{renormalization}.

\begin{definition}
A \emph{$q$-hypergeometric series} is a sum of the form
$H(q)=\sum_{n=0}^\infty H_n(q)$ where $H_n(q) \in \Q(q)$ and
$H_{n+1}(q)/H_n(q)= R(q,q^n)$ for all $n\ge0$ for some fixed rational
function $R(q, r) \in \Q(q, r)$.
\end{definition}
 Renormalization is a construction
used for extending $H(q)$ to the region $\abs{q}>1$.

\begin{definition}(Renormalization) Let 
$H(q)$ be a mock theta function of weight $k$ defined by the $q$-hypergeometric series 
$H(q)=\sum_{n=0}^\infty H_n(q)$ that converges for $\abs{q}<1$.
Let $g(z) = \sum_{n \in \Q^+} b_n q^n$ be the Zagier shadow of $H$. 
When $\lim_{n\to \infty} H_n(q^{-1})$ converges as a power series in $q$ with $\abs{q}<1$, 
define the  series $\cS[H](q)$ and $\cG[H](q)$ by 
\begin{equation}\label{eqn:sumsoftails}
\cS[H](q) := \sum_{n=0}^\infty \( H_n(q^{-1}) - H_\infty(q^{-1})\) - \cG[H](q)
\end{equation}
where $H_{\infty}(q^{-1}) := \lim_{n\to \infty} H_{n}(q^{-1})$ such that 
\begin{enumerate}
\item  $\cG[H](q)$ vanishes to infinite order at every root of unity $q$ where $H(q^{-1})$ converges. 
\item $\cS[H](q)$ converges for $\abs{q}<1$ and  $ \cS[H](q) 
= C_k \sum_{n \in \Q^{\mp} } \overline{b_n} n^{2-k} q^{\lambda \abs{n}- \alpha}$ 
for some constants $C_k$ and $\alpha$. 
\end{enumerate}
The term $\cS[H](q)$ is called the \emph{shadow} of $H$ and $\cG[H](q)$ is called the \emph{ghost} of $H$. 
\end{definition}

\begin{remark}
In certain cases, the construction in \eqref{eqn:sumsoftails} is known
as the ``sums of tails''.  It appears in Ramanujan's work, as well as
in a number of other papers. See \cite{andrewsEuler, AGL, AJO, BK2,
  zagierStrange}, for example.
\end{remark}

\begin{example}
The mock theta function $f(q) = \sum_{n=0}^\infty f_n(q)$ with $f_n(q)
= \frac{q^{n^2}}{(-q;q)_n^2}$ has Zagier shadow equal to $g(z) =
\frac{1}{\sqrt{6}} \sum_{n=1}^\infty \(\frac{-12}{n}\) n
q^{\frac{n^2}{24}}$.  The following calculations demonstrate that
$\cS[f](q)$ is $-\sum_{n=1}^\infty \(\frac{-12}{n}\)
q^{\frac{n^2-1}{24}}$ and thus gives the Fourier coefficients of
$g^*(z)$, up to the incomplete Gamma function terms.

In the notation above $f_n(q^{-1}) = \frac{q^n}{(-q;q)_n^2}$.  So $f_\infty(q^{-1}) = 0$. Then 
\begin{align*}
\sum_{n=0}^\infty \( f_n(q^{-1}) - f_\infty(q^{-1})\) =& \sum_{n=0}^\infty \frac{q^n}{(-q;q)_n^2}  \\
=& 1 + q - q^2 + 2q^3 - 4q^4 + 5q^5 - 6q^6 + 7 q^7 + \cdots + 392 q^{29} + \cdots.
\end{align*}
Ramanujan's lost notebook (given in \cite{andrewsLost}, valid for
$|q|<1$) contains the identity
$$f(q^{-1}) = \sum_{n=0}^\infty \frac{q^n}{(-q;q)_n^2}  = 2\sum_{n\ge 0} \(\frac{-12}{n}\) q^{\frac{n^2 -1}{24}} +  \frac{1}{(-q)_\infty^{2}}
 \sum_{n\ge 0} (-1)^{n+1} q^{\frac{n(n+1)}{2}}.$$
So $\cS[f](q) =  2\sum_{n\ge 0} \(\frac{-12}{n}\) q^{\frac{n^2 -1}{24}}$ and 
$\cG[f](q) = \frac{1}{(-q)_\infty^2} \sum_{n=0}^\infty (-1)^{n+1} q^{\frac{1}{2} n(n+1)}.$
\end{example}

\begin{remark}
The formal operation of $q\mapsto q^{-1}$ was previously considered by Zwegers \cite{zwegers1} and 
(see Lawerence-Zagier \cite{LZ}), 
Hikami \cite{hikami} and Bringmann, Folsom, and the third author \cite{BFR}.  In all of those cases 
the authors consider cases when $H_{\infty}(q) = 0$.  So renormalization is not needed. 
Recently, Mortenson \cite{mortenson} has given 
a heuristic which given the ``Appell-Lerch'' version of a mock theta function will predict what the 
shadow will be.  This uses a heuristic from joint work with Hickerson \cite{hm} coming from $q\mapsto q^{-1}$. 
This heuristic is closely related to results of the third author with Chern \cite{CR} which relates the Appell-Lerch sums 
to partial theta functions via the Mordell integral. 
\end{remark}

\section{Statement of Results}\label{sec:results}\label{sec:Results}
 In Part I of
these papers renormalization was used to produce the other half of the
Fourier coefficients of a Maass wave form from a series which
contained half of them. 
This section uses renormalization on a pair of mock modular forms to
produce the Fourier coefficients of the Zagier shadow. 
 Let
\begin{align}
M_1(q):=& \sum_{n=0}^\infty M_{1, n}(q) =  \sum_{n=0}^\infty \frac{(-1)^n (q;q)_n}{(-q;q)_n} 
= 1 - \frac{1-q}{2} \sum_{n=0}^\infty \frac{(q;q)_n (-q)^n}{(-q;q)_n}  \\ 
 & = \frac{1}{2} + q - 2q^2 + 4q^3 - 5q^4 + 4q^5 - 4q^6 + 8q^7 - 10q^8 
 + 5q^9 - 4q^{10} + 12q^{11} - \cdots \nonumber \\  
M_2(q):= &\sum_{n=0}^\infty M_{2, n}(q) =  \sum_{n=0}^\infty \frac{q^{n+1} (q^2;q^2)_n}{(q;q^2)_{n+1}}  \\
&= q + 2q^2 + 3q^3 + 3q^4 + 4q^5 + 5q^6 + 4q^7 + 5q^8 + 7q^9 + 5q^{10} + 6q^{11} + 8q^{12} + \cdots \nonumber 
\end{align}
\begin{remark}
The series defining $M_1(q)$ does not technically converge for $\abs{q}<1$.  However, 
standard recurrence relations for the series extend converge into this domain. 
See \eqref{eqn:fine2.4}
and the discussion thereafter. 
\end{remark}

\begin{theorem}\label{thm:mock}
Let $S_0 := \{x \in \Q : =\gamma \cdot x = 0 \text{ for some } \gamma \in \Gamma_0(2)\}$ and 
$S_\infty := \{x \in \Q : =\gamma \cdot x = \infty \text{ for some } \gamma \in \Gamma_0(2)\}$.

\begin{enumerate}
\item[(A)] (Zagier shadow)
$M_1(q)$ is a mock theta function with Zagier shadow proportional 
to $$\frac{\eta(z)^2}{\eta(2z)} = \sum_{n\in \Z} (-1)^n q^{n^2}.$$
Moreover, $\frac{\eta(z)^2}{\eta(2z)}$ is modular with respect to $\Gamma_0(2)$ which vanishes 
at the cusp $0$ (i.e. at all $z\in S_0$) and not at the cusp $\infty$ (i.e. at all $z\in S_\infty$). 

$M_2(q)$ is a mock theta function  with Zagier shadow proportional to 
$$\frac{\eta(2z)^2}{\eta(z)} = \sum_{n\in \Z}  q^{2 \(n+\frac{1}{4}\)^2}$$
Moreover, $\frac{\eta(2z)^2}{\eta(z)}$ is modular with respect to $\Gamma_0(2)$ which 
vanishes at the cusp $\infty$ and not at the cusp $0$.

\item[(B)] (Renormalized shadow)
In the notation above, $$\cS[M_1](q):= 4 \sum_{n=1}^\infty (-1)^n n q^{n^2} \ \ \text{ and } \ \ 
\cG[M_1](q) := 2 \frac{(q;q)_\infty}{(-q;q)_\infty} \sum_{j=1}^\infty \frac{q^j}{1-q^{2j}}$$
since 
$$ 4 \sum_{n=1}^\infty (-1)^n n q^{n^2}= \sum_{n=0}^\infty \(
\frac{(q;q)_n}{(-q;q)_n} - \frac{(q;q)_\infty}{(-q;q)_\infty}\) -2
\frac{(q;q)_\infty}{(-q;q)_\infty} \sum_{j=1}^\infty
\frac{q^j}{1-q^{2j}}. $$ 

In the notation above, $$\cS[M_2](q) :=
\sum_{n=0}^\infty \(n+\frac{1}{4}\) q^{\frac{1}{2}n(n+1)} \ \ \text{
  and } \ \ \cG[M_2](q) := \frac{(q^2;q^2)_\infty}{(q;q^2)_\infty}\(
\frac{1}{4} + \sum_{j=1}^\infty \frac{(-1)^j q^j}{1-q^j}\)$$ since
$$ \sum_{n=0}^\infty \(n+\frac{1}{4}\) q^{\frac{1}{2}n(n+1)}  = - \sum_{n=0}^\infty \( \frac{(q^2;q^2)_n}{(q;q^2)_{n+1}} - \frac{(q^2;q^2)_\infty}{(q;q^2)_\infty}\) 
 - \frac{(q^2;q^2)_\infty}{(q;q^2)_\infty}\( \frac{1}{4} +  \sum_{j=1}^\infty \frac{(-1)^j q^j}{1-q^j}\)$$
\end{enumerate}

\item[(C)] (Roots of unity)
 $M_1(e^{2\pi i x} )$ is finite exactly at  those $x$ in 
 $ 
 \{ \frac{a}{b} : (a, b) = 1, b \text{ is odd} \} = S_0$.   

$M_2(e^{2\pi i x} )$ is finite exactly at those $x$ in 
$\{ \frac{a}{b} : (a,b) = 1, b \text{ is even} \} = S_\infty$.

\end{theorem}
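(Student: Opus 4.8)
The plan is to evaluate each $q$-hypergeometric series directly at a root of unity $\zeta=e^{2\pi i a/b}$ with $\gcd(a,b)=1$ (so $\zeta$ has exact multiplicative order $b$), in the same spirit as Kontsevich's series $F(q)=\sum_{n}(q;q)_n$: a factor $1-\zeta^{c}$ or $1+\zeta^{c}$ occurring in a \emph{numerator} Pochhammer symbol makes every later term vanish and thus terminates the sum, whereas such a factor in a \emph{denominator} Pochhammer produces a pole. The two elementary facts I would isolate first are that $1-\zeta^{j}=0$ iff $b\mid j$, and that $1+\zeta^{j}=0$ iff $\zeta^{j}=-1$, which is solvable only when $b$ is even and then has least solution $j=b/2$; here one uses that $\gcd(a,b)=1$ forces $a$ odd when $b$ is even, so that $\zeta^{b/2}=(-1)^{a}=-1$. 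Finiteness is then decided entirely by whether the first numerator zero or the first denominator zero occurs at the smaller index.

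For $M_1(q)=\sum_n\frac{(-1)^n(q;q)_n}{(-q;q)_n}$ I would note that the numerator $(\zeta;\zeta)_n=\prod_{j=1}^n(1-\zeta^j)$ first vanishes at $n=b$, while the denominator $(-\zeta;\zeta)_n=\prod_{j=1}^n(1+\zeta^j)$ first vanishes at $n=b/2$ when $b$ is even and never when $b$ is odd. Hence for $b$ odd every denominator is nonzero and $M_{1,n}(\zeta)=0$ for all $n\ge b$, so $M_1(\zeta)=\sum_{n=0}^{b-1}M_{1,n}(\zeta)$ is a finite sum; for $b$ even the terms with $b/2\le n<b$ carry a nonvanishing numerator over a vanishing denominator and so are genuine poles, with no cancellation since $1-\zeta^{j}\neq0$ for $1\le j\le n<b$. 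This yields that $M_1(e^{2\pi i a/b})$ is finite exactly when $b$ is odd.

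For $M_2(q)=\sum_n\frac{q^{n+1}(q^2;q^2)_n}{(q;q^2)_{n+1}}$ the bookkeeping is the mirror image. Since $\zeta^{2j}=1$ iff $b\mid 2j$, the numerator $(\zeta^2;\zeta^2)_n=\prod_{j=1}^n(1-\zeta^{2j})$ first vanishes at $n=b/2$ when $b$ is even and at $n=b$ when $b$ is odd; since $2j+1$ is odd, the denominator $(\zeta;\zeta^2)_{n+1}=\prod_{j=0}^n(1-\zeta^{2j+1})$ can vanish only for $b$ odd, first at index $n=(b-1)/2$. Therefore for $b$ even no denominator vanishes and the series terminates at $n=b/2$, giving a finite value, whereas for $b$ odd the denominator zero at $n=(b-1)/2$ precedes the numerator zero at $n=b$, so the terms with $(b-1)/2\le n<b$ are poles. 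Thus $M_2(e^{2\pi i a/b})$ is finite exactly when $b$ is even.

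Finally I would identify these parity conditions with the cusp orbits: every element of $\Gamma_0(2)$ has even lower-left entry and hence odd diagonal entries, so it carries $0=0/1$ to a fraction with odd denominator and $\infty=1/0$ to one with even denominator, giving $S_0=\{a/b:\gcd(a,b)=1,\ b\text{ odd}\}$ and $S_\infty=\{a/b:\gcd(a,b)=1,\ b\text{ even}\}$. The main obstacle here is interpretive rather than computational: because the series for $M_1$ does not converge on $\abs{q}<1$ in the naive sense (cf. the remark preceding the theorem), I must pin down ``finite'' as termination of the substituted series and then confirm that in the non-terminating cases the offending terms are honest poles that persist under the recurrence reorganization referenced before the theorem, or equivalently that the dominant singular contribution as $q\to\zeta$ radially does not cancel. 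The coprimality-and-parity bookkeeping above is exactly what guarantees this, and it is also the combinatorial shadow of the cuspidality statement in part (A): $\tfrac{\eta(z)^2}{\eta(2z)}$ vanishes at the odd-denominator cusp $0$ and not at the even-denominator cusp $\infty$, matching where $\cG[M_1]$ is harmless versus where $M_1$ blows up.
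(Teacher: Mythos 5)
Your treatment of part (C) is essentially correct and complete: the two elementary facts about when $1-\zeta^{j}$ and $1+\zeta^{j}$ vanish at $\zeta=e^{2\pi i a/b}$, the comparison of the first numerator zero against the first denominator zero for each of $M_{1}$ and $M_{2}$, and the parity description of the $\Gamma_0(2)$-orbits of $0$ and $\infty$ are all sound. (For the last point you should also record that $\Gamma_0(2)$ has exactly the two cusps $0$ and $\infty$, so that the two orbits partition $\Q$ and your containments $S_0\subseteq\{a/b:b\text{ odd}\}$, $S_\infty\subseteq\{a/b:b\text{ even}\}$ are forced to be equalities.) This is, if anything, more than the paper itself writes down, since its proof of the theorem is silent on part (C).

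The genuine gap is that parts (A) and (B) --- the substantive analytic claims of the theorem --- are not addressed at all; your proposal proves only one of the three assertions. For (A) the paper's route is Proposition \ref{prop:mock_series}: Fine's transformation \eqref{eqn:fine12.2} (with $a=1$, $b=t=-1$ for $M_1$, and with $q\mapsto q^2$, $a=1$, $b=t=q$ for $M_2$) identifies $M_1(q)=\tfrac12\,\ol f(q)$ and $M_2(q)=-N^o(0,-1,1;q)$, after which the mock modularity and the shadows $\eta(z)^2/\eta(2z)$ and $\eta(2z)^2/\eta(z)$ are quoted from Theorem \ref{thm:BL1} (Bringmann--Lovejoy \cite{BL1}) and Theorem \ref{thm:ABL} (Alfes--Bringmann--Lovejoy \cite{ABL}). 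None of this is reachable from the root-of-unity bookkeeping you set up. For (B) one needs the sums-of-tails identities themselves, e.g.\ $4\sum_{n\ge1}(-1)^n n q^{n^2}=\sum_{n\ge0}\bigl(\tfrac{(q;q)_n}{(-q;q)_n}-\tfrac{(q;q)_\infty}{(-q;q)_\infty}\bigr)-2\tfrac{(q;q)_\infty}{(-q;q)_\infty}\sum_{j\ge1}\tfrac{q^j}{1-q^{2j}}$ and its companion for $M_2$; the paper takes these from Theorem 3 of \cite{AJO} (they are in the orbit of Theorems \ref{thm:AJO} and \ref{thm:AJO2}), together with the preliminary computation that $M_{1,n}(q^{-1})=\tfrac{(q;q)_n}{(-q;q)_n}$ and $M_{2,n}(q^{-1})=-\tfrac{(q^2;q^2)_n}{(q;q^2)_{n+1}}$, which is what makes these identities instances of the renormalization \eqref{eqn:sumsoftails}. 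Your closing remark that the cuspidality in (A) ``matches'' the behaviour found in (C) is a pleasant observation but is not a proof of (A); as written, the proposal would need to be supplemented by the full content of Proposition \ref{prop:mock_series} and the cited sums-of-tails results before it establishes the theorem.
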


\begin{remark}
In each case, we see that the series $\cS[M_j]$ is the 
the ``half'' derivative of the Zagier shadow.  A quantum modular form results in an exactly analogous fashion 
to the ``half'' derivative of the Dedekind $\eta$-function, as discussed in Section \ref{subsec:quantum}. 
For more on ``half'' derivatives see \cite{zagierStrange}.  
\end{remark}

\begin{remark}
Recent work of Rolen and Schneider \cite{rs} demonstrates how to build a ``vector-valued'' quantum 
modular form from this pair of examples. 
\end{remark}

Notice that, modular forms are mock theta functions whose Zagier shadow is 0.  So 
we can extend this to a number of other examples to produce ``shadows'' and ``ghosts'' for 
series equal to modular forms

\subsection{Modular forms}\label{subsec:mod_forms}
The results of this subsection demonstrate renormalization on
$q$-series which represent modular forms.  We begin with two
examples. 

First, since both functions are the generating function for the
number of integer partitions:
$$P_1(q):= \sum_{n=0}^\infty \frac{q^n}{(q;q)_n}$$ equals $\prod_{m=1}^{\infty} (1-q^m)^{-1}$
when $\abs{q}<1$.  On the other hand, $P_1(q)$ converges for $\abs{q}>1$.  In particular, 
$$P_1(q^{-1}) = \sum_{n=0}^\infty \frac{(-1)^n q^{\frac{1}{2}n(n-1)}}{(q;q)_n}.$$
Consequentially, $P_1(q^{-1}) = 0$ (see, for instance, \cite{BFR}).  So, $\cS[P_1](q) = 0$ and 
$\cG[P_1](q) = 0$. 
 
There is a second well known $q$-hypergeometric series which equals
the partition generating function.  It is $$P_2(q):= \sum_{n=0}^\infty
\frac{q^{n^2}}{(q;q)_n^2}.$$ As before, we see that $P_2(q)$ converges
for $\abs{q}>1$ and
$$P_2(q^{-1}) = \sum_{n=0}^\infty \frac{q^n}{(q;q)_n^2} =
\frac{1}{(q;q)_\infty^2} \sum_{n=0}^\infty (-1)^n
q^{\frac{1}{2}n(n+1)},$$ 
a series which is not zero!  So $\cS[P_2](q) = 0$, but 
$\cG[P_2](q) =\frac{1}{(q;q)_\infty^2} \sum_{n=0}^\infty (-1)^n q^{\frac{1}{2}n(n+1)}.$
 Identities of
this form are the focus previous work of the third author with
Bringmann and Folsom \cite{BFR}.

Let 
\begin{align}
F_1(q):=& \sum_{n=0}^\infty F_{1, n}(q) = \sum_{n=0}^\infty \frac{(-1)^n q^{\frac{1}{2}n(n+1)}}{(q;q)_n}  \\
&= 1 - q - q^2 + q^5 + q^7 - q^{12} - q^{15} + q^{22} + q^{26} - q^{35} - q^{40} + q^{51} +  \cdots \nonumber 
\\ 
F_2(q):= & \sum_{n=0}^\infty F_{2, n}(q) = \sum_{n=0}^\infty \frac{(-1)^n (-q;q)_n}{(q;q)_{n}}  
= \frac{1}{2} (1-q) \sum_{n=0}^\infty \frac{(-q)^n (-q;q)_n}{(q;q)_n}
 \\
 &= \frac{1}{2} - q + q^4 - q^{9} + q^{16} - q^{25} + q^{36} - q^{49} + q^{64} - \cdots \nonumber \\
F_3(q):= & \sum_{n=0}^\infty F_{3, n}(q) = \sum_{n=0}^\infty \frac{q^{n} (q;q^2)_n}{(q^2;q^2)_n} \\
 & =  1 + q + q^3 + q^{6} + q^{10} + q^{15} + q^{21} + q^{28} + q^{36} + q^{45} +  q^{55} + \cdots   \nonumber \\
F_4(q):= & \sum_{n=0}^\infty F_{4,n}(q) = \sum_{n=0}^\infty \frac{(-1)^{n}q^{2n+1} (-q)_n}{(q)_n(1-q^{2n+1})} \\
&=  q + q^2 - q^4 + q^8 + q^9 - q^{16} + q^{18} + q^{25} + q^{32} - q^{36} + q^{49} + q^{50}  -q^{64} +  \cdots  \nonumber 
\end{align}
\begin{remark}
The series defining $F_2(q)$ does not technically converge for $\abs{q}<1$.  However, 
standard recurrence relations for the series extend converge into this domain. 
See \eqref{eqn:fine2.4}
and the discussion thereafter. 
\end{remark}

\begin{theorem}\label{thm:modular}
\begin{enumerate}
\item[(A)] For $\abs{q}< 1$ 
\begin{align*}
F_1(q) = &\sum_{n\in \Z} (-1)^n q^{\frac{1}{2}n(3n+1)} \\
F_2(q) =& \frac{1}{2} \sum_{n\in \Z} (-1)^n q^{n^2} \\
F_3(q) =& \frac{1}{2} \sum_{n\in \Z} q^{\frac{1}{2}n(n+1)} \\
F_4(q) =& \frac{1}{2} \sum_{n\in \Z}(-1)^{n+1} q^{n^2} + \frac{1}{2} \sum_{n\in \Z} q^{2n^2}
\end{align*}
In each case, $F_j(q)$ is, up to a power of $q$, a holomorphic modular form of weight $1/2$. 

\item[(B)] 
Take  $\cS[F_j](q) = 0$ for $j=1, 2, 3$ and 
\begin{align*}
\cG[F_1](q) =& 
\sum_{n=0}^\infty \(F_{1,n}(q^{-1}) - F_{1, \infty}(q^{-1})  \) \\
   = &\sum_{n=0}^\infty \(\frac{1}{(q;q)_n} - \frac{1}{(q;q)_\infty} \) =  -\frac{1}{(q;q)_\infty} \sum_{n=1}^\infty \frac{q^n}{1-q^n} \\
\cG[F_2](q) =& \sum_{n=0}^\infty \(F_{2,n}(q^{-1}) - F_{2, \infty}(q^{-1})  \) \\ = &
\sum_{n=0}^\infty \(\frac{(-q;q)_n}{(q;q)_n} - \frac{(-q;q)_\infty}{(q;q)_\infty} \) =
- 2\frac{(-q;q)_\infty}{(q;q)_\infty} \sum_{n=1}^\infty \frac{q^n}{1-q^{2n}} \\
\cG[F_3](q) = &
\sum_{n=0}^\infty \(F_{3,n}(q^{-1}) - F_{3, \infty}(q^{-1})  \) \\ = &
\sum_{n=0}^\infty \(\frac{(q;q^2)_n}{(q^2;q^2)_n} - \frac{(q;q^2)_\infty}{(q^2;q^2)_\infty} \) =
\frac{(q;q^2)_\infty}{(q^2;q^2)_\infty} \sum_{n=1}^\infty \frac{(-1)^n q^{n}}{1-q^n}.
\end{align*}
Additionally, $\cS[F_4] = 0$ and  
\begin{align*}
\cG[F_4](q) = \sum_{n=0}^\infty \(F_{4,n}(q^{-1}) - F_{4, \infty}(q^{-1})  \) = &
-\sum_{n=0}^\infty \(\frac{(-q;q)_n}{(q;q)_n(1-q^{2n+1})} - \frac{(-q;q)_\infty}{(q;q)_\infty} \) \\
 =& \frac{(-q;q)_\infty}{(q;q)_\infty}\(  2\sum_{n=1}^\infty \frac{q^{n}}{1-q^{2n}}  -h(q)\)
\end{align*}
where $$h(q):= \sum_{n=0}^\infty q^{n+1} \frac{(q)_{2n+1}}{(-q)_{2n+2}} \ \ \ \text{ and } \ \ \ 
h(q) = - \sum_{\fa \subset \Z[\sqrt{2}]} (-1)^{N(\fa)} q^{\abs{N(\fa)}}.$$

\end{enumerate}
\end{theorem}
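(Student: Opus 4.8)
The plan is to handle the four series in two layers: first the closed-form theta evaluations and the weight-$1/2$ modular interpretation of part (A), and then the reflection $q\mapsto q^{-1}$ together with the sum-of-tails evaluations of part (B). For part (A) I would identify each $F_j(q)$ with a bilateral theta series by means of classical $q$-series summations. For $F_1$ this is exactly Euler's identity $\sum_{n\ge0}(-z)^n q^{\binom{n}{2}}/(q;q)_n=(z;q)_\infty$ specialized to $z=q$, which collapses $F_1(q)$ to $(q;q)_\infty$; Euler's pentagonal number theorem then yields the stated bilateral sum $\sum_{n\in\Z}(-1)^nq^{n(3n+1)/2}$. For $F_2$ I would first pass to the convergent alternating representation $F_2(q)=\frac12(1-q)\sum_{n\ge0}(-q)^n(-q;q)_n/(q;q)_n$ supplied by the recurrence noted in the remark following its definition, and evaluate it through the Rogers--Fine identity (equivalently the $q$-Gauss sum) to reach Gauss's theta $(q;q)_\infty/(-q;q)_\infty=\sum_{n\in\Z}(-1)^nq^{n^2}$. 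For $F_3$ the relevant classical evaluation is $\sum_{n\ge0}q^n(q;q^2)_n/(q^2;q^2)_n=\sum_{n\ge0}q^{\binom{n+1}{2}}$, and the symmetry $n\mapsto-n-1$ converts $\sum_{n\ge0}$ into $\frac12\sum_{n\in\Z}$. The series $F_4$ is the most delicate; here I would expand its Appell/partial-theta structure so as to split it into the two theta pieces on the right. Once each $F_j$ is written as a theta constant, the modular claim is immediate from the eta-quotient dictionary $(q;q)_\infty=q^{-1/24}\eta(z)$, $(q;q)_\infty/(-q;q)_\infty=\eta(z)^2/\eta(2z)$, and $\sum_{n\ge0}q^{\binom{n+1}{2}}=(q^2;q^2)_\infty/(q;q^2)_\infty$: after multiplication by a suitable rational power of $q$ each is an eta-quotient, hence a weight-$1/2$ modular form on a congruence subgroup.

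For part (B) I would first perform the reflection $q\mapsto q^{-1}$ on a single summand. The elementary identities $(q^{-1};q^{-1})_n=(-1)^nq^{-\binom{n+1}{2}}(q;q)_n$, $(-q^{-1};q^{-1})_n=q^{-\binom{n+1}{2}}(-q;q)_n$ and $(q^{-1};q^{-2})_n=(-1)^nq^{-n^2}(q;q^2)_n$ make all $q$-powers cancel, yielding $F_{1,n}(q^{-1})=1/(q;q)_n$, $F_{2,n}(q^{-1})=(-q;q)_n/(q;q)_n$, $F_{3,n}(q^{-1})=(q;q^2)_n/(q^2;q^2)_n$ and $F_{4,n}(q^{-1})=-(-q;q)_n/((q;q)_n(1-q^{2n+1}))$, whose $n\to\infty$ limits are precisely the $F_{j,\infty}(q^{-1})$ subtracted in the theorem. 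Since each $F_j$ is a genuine modular form, its Zagier shadow vanishes, so every shadow coefficient $b_n$ is zero and the defining shape of the renormalized shadow forces $\cS[F_j](q)=0$, in accordance with the remark that modular forms are mock theta functions of zero shadow. Hence the ghost equals the full sum of tails $\sum_{n\ge0}(F_{j,n}(q^{-1})-F_{j,\infty}(q^{-1}))$, and the problem reduces to evaluating these tails in closed form.

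Each of the first three sum-of-tails evaluations is accessible by standard $q$-series technology: telescoping $1/(q;q)_\infty-1/(q;q)_n=\sum_{k>n}q^k/(q;q)_k$ and summing in $n$ produces the $F_1$ ghost via an Uchimura-type evaluation of $\sum_k kq^k/(q;q)_k$, while parallel Abel-summation or Rogers--Fine arguments generate the Lambert series $\sum_j q^j/(1-q^{2j})$ and $\sum_j(-1)^jq^j/(1-q^j)$ for $F_2$ and $F_3$; for these I would lean on the sum-of-tails results of \cite{andrewsEuler,AGL,AJO,BFR,zagierStrange}. The genuinely hard step is the $F_4$ ghost: after isolating the Lambert part $2\sum_{n\ge1}q^n/(1-q^{2n})$ there remains the partial-theta series $h(q)=\sum_{n\ge0}q^{n+1}(q;q)_{2n+1}/(-q;q)_{2n+2}$, and one must establish its arithmetic closed form $h(q)=-\sum_{\fa\subset\Z[\sqrt2]}(-1)^{N(\fa)}q^{\abs{N(\fa)}}$. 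This is the main obstacle, an Andrews--Dyson--Hickerson--type identity \cite{ADH} in which a $q$-hypergeometric sum is shown to be controlled by the indefinite norm form and the arithmetic of the real quadratic order $\Z[\sqrt2]$ (class number one, fundamental unit $1+\sqrt2$ of norm $-1$, which accounts for both the sign $(-1)^{N(\fa)}$ and the absolute value $\abs{N(\fa)}$). I would attack it by producing a Bailey-pair or double-sum representation of $h(q)$ and matching the resulting indefinite binary quadratic form against the ideal-counting generating function of $\Z[\sqrt2]$; securing that correspondence, rather than the routine reflections and telescopings above, is where the real difficulty lies.
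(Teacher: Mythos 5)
Your treatment of $F_1$, $F_2$, $F_3$ is sound and close in spirit to the paper's: the paper evaluates $F_1$ via Ramanujan's false-theta identity \eqref{eqn:ramanujanFalse} with $a=-1$ rather than Euler's expansion of $(z;q)_\infty$, evaluates $F_2$ via Fine's \eqref{eqn:fine14.31}, and evaluates $F_3$ via Rogers's identity \eqref{eqn:rogers}; your substitutes (Euler's theorem, $q$-Gauss/Rogers--Fine, and the triangular-number evaluation) are classical equivalents, and your reflection formulas $F_{1,n}(q^{-1})=1/(q;q)_n$, $F_{2,n}(q^{-1})=(-q;q)_n/(q;q)_n$, etc., are all correct and match the theorem's signs. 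For part (B), the paper likewise does not re-derive the first three ghost evaluations from scratch but quotes the sums-of-tails results of \cite{AJO} (Theorem \ref{thm:AJO2}), so leaning on that literature is exactly what is done there.

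The genuine gap is $F_4$ in part (A). ``Expand its Appell/partial-theta structure so as to split it into the two theta pieces'' is not an argument, and this is precisely the step the paper calls ``more intricate'': one must interchange the order of summation by expanding $q^{2n+1}/(1-q^{2n+1})$ as a geometric series, apply the $q$-binomial theorem \eqref{eqn:qbinomial} to the resulting inner sum, reassemble, invoke the Bailey--Daum summation \eqref{eqn:bailyDaum} with $q\mapsto q^2$, $a=-1$, $b=-q$, and finally convert the infinite products into the two theta series via eta-quotient manipulations and Jacobi's identity. None of this machinery is identified in your plan, so the fourth evaluation in (A) is unproved as written. A second, smaller caveat concerns the $F_4$ ghost in (B): the paper obtains it by citing Bringmann--Kane (Theorem \ref{thm:BK2}), and the arithmetic identity $h(q)=-\sum_{\fa\subset\Z[\sqrt{2}]}(-1)^{N(\fa)}q^{\abs{N(\fa)}}$ is likewise quoted from \cite{BK}. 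You correctly single this out as the hard Andrews--Dyson--Hickerson-type step, but your proposed Bailey-pair/indefinite-form attack is a program rather than a proof, so the $F_4$ claims in both parts remain open in your write-up unless you either carry out that program or, as the paper does, import the Bringmann--Kane results.
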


\begin{remark}
The series $F_4(q)$ is different from the other cases considered in this work.  In each other 
case the ``ghost'' (i.e. the series $\cG[F_j](q)$ from \eqref{eqn:sumsoftails}) that arises from 
renormalizing is equal to 
$H_\infty(q^{-1})$ times a divisor-type function.
\end{remark}

\begin{remark}
The first identity of (B) is well known. See for instance Fine \cite{fine}.
\end{remark}




\subsection{Outline}
In Section \ref{sec:background} we collect some results on $q$-hypergeometric 
series.
Sections \ref{sec:mock}, and \ref{sec:modular} contain the proofs of 
Theorems \ref{thm:mock} and \ref{thm:modular}, respectively. 
Section \ref{sec:Recurrence} contains a survey of techniques for carrying out renormalization techniques. 
This section contains a number of additional results which may be of independent interest in the combinatorics of partitions and $q$-series. 
In Section \ref{sec:Other} we discuss ``renormalization''
 applied to a number of other examples which exhibit interesting 
structure, but do not seem to fit into any known theory of non-holomorphic modular forms. It would be interesting
if these examples were explored further by others.

\section{Preliminaries}\label{sec:background}

This section collects some results dealing with $q$-hypergeometric series. 
Throughout we adopt Fine's \cite{fine} notation for the basic hypergeometric series 
$$F(a, b;t) = F(a, b; t:q):= \sum_{n=0}^\infty \frac{(aq;q)_n}{(bq;q)_n} t^n.$$
The following identity is (6.3) of \cite{fine}
\begin{equation}\label{eqn:fine6.3}
F(a, b;t) = \frac{1-b}{1-t} F\( \frac{at}{b}, t;b\).
\end{equation}
Moreover,  when $\abs{t} = 1$, but $t\ne 1$ and $\abs{q}<1$ 
the series converges  via the recurrence 
\begin{equation}\label{eqn:fine2.4}
F(a, b;t) = \frac{1-b}{1-t} + \frac{b-atq}{1-t} F(a, b; tq);
\end{equation}
this identity appears as (2.4) in \cite{fine}. 

The following two equations are (12.2) and (14.31) of Fine's book \cite{fine}:
\begin{equation}\label{eqn:fine12.2}
(1-t) F(a, b;t) = \sum_{n=0}^\infty \frac{(b/a)_n}{(bq)_n(tq)_n} (-at)^n q^{\frac{1}{2}n(n+1)},
\end{equation}
and 
\begin{equation}\label{eqn:fine14.31}
(1-a)F(a,-a;a) = 1+ 2 \sum_{n=1}^\infty (-1)^n a^{2n} q^{n^2}.
\end{equation}
Additionally, equation (1) of \cite{rogers} is
\begin{equation}\label{eqn:rogers}
F(a/q, b/q;t) = \sum_{n=0}^\infty \frac{(a;q)_n (atq/b;q)_n b^n t^{n} q^{n^2 - n} (1-atq^{2n})}{(b;q)_n 
(t;q)_{n+1}}.
\end{equation}
The Baily-Daum summation formula (see (1.8.1) of \cite{GR}) is 
\begin{equation}\label{eqn:bailyDaum}
\sum_{n=0}^\infty \frac{(a)_n(b)_n}{(q)_n(aq/b)_n} \(-\frac{q}{b}\)^n = 
\frac{(-q;q)_\infty (aq;q^2)_\infty (aq^2/b^2;q^2)_\infty}{(aq/b)_\infty (-q/b)_\infty}
\end{equation}
One has the $q$-binomial theorem 
\begin{equation}\label{eqn:qbinomial}
\sum_{n=0}^\infty \frac{(a/b)_n}{(q)_n} b^n = \frac{(a)_\infty}{(b)_\infty}.
\end{equation}
Finally, the following identities are due to  Ramanujan:
\begin{equation}\label{eqn:ramanujanFalse}
\sum_{n=0}^\infty \frac{(-1)^n a^{2n} q^{\frac{1}{2}n(n+1)}}{(-aq;q)_n} = 
\sum_{n=0}^\infty a^{3n} q^{\frac{1}{2}n(3n+1)} (1-a^2 q^{2n+1}). 
\end{equation}
and
\begin{equation}\label{eqn:entry1.7.2}
\sum_{n=0}^\infty  \frac{(-aq/b)_n b^n (-1)^n q^{\frac{1}{2}n(n+1)}}{(-b;q)_{n+1}} = 
\sum_{n=0}^\infty \frac{(-b)^n (-q;q)_n (-aq/b;q)_n}{(aq;q^2)_{n+1}}.
\end{equation}
The first is (3.1) of \cite{BY} and  the second 
appears as Entry 1.7.2 of Ramanujan's Lost notebook book  volume II \cite{ABII}.

The next two theorems will be used to carry out the renormalization in the next three sections of this paper. 
The theorems are due to Andrews, Jim\'enez-Urroz, and Ono \cite{AJO}.
\begin{theorem}[Theorem 1 of \cite{AJO}]\label{thm:AJO}
\begin{align*}
\sum_{n=0}^\infty & \( \frac{(t;q)_\infty}{(a;q)_\infty} - \frac{(t;q)_n}{(a;q)_n}\) 
= \sum_{n=1}^\infty \frac{(q/a;q)_n}{(q/t;q)_n} \(\frac{a}{t}\)^n \\
& \hspace{.7in} + \frac{(t;q)_\infty}{(a;q)_\infty} \( \sum_{n=1}^\infty \frac{q^n}{1-q^n} + \sum_{n=1}^\infty 
\frac{q^nt^{-1}}{1-q^nt^{-1}} - \sum_{n=0}^\infty \frac{tq^n}{1-tq^n} - \sum_{n=0}^\infty \frac{at^{-1}q^n}{1-at^{-1}q^n}\).
\end{align*}
\end{theorem}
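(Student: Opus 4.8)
The plan is to evaluate the ``sum of tails'' on the left as a regularized value at $x=1$ of a basic hypergeometric generating function, and then to read off the right-hand side from a single hypergeometric transformation together with one logarithmic differentiation. Write
$$c_n := \frac{(t;q)_n}{(a;q)_n}, \qquad c_\infty := \frac{(t;q)_\infty}{(a;q)_\infty},$$
and set $\Sigma := \sum_{n\ge 0}(c_\infty - c_n)$. Applying the $q$-binomial theorem \eqref{eqn:qbinomial} to the ratio $(aq^n;q)_\infty/(tq^n;q)_\infty$ gives $c_\infty - c_n = -c_\infty\sum_{k\ge1}\frac{(a/t;q)_k}{(q;q)_k}t^k q^{nk}$, so $c_\infty - c_n = O(q^n)$ and $\Sigma$ converges for $|q|<1$ and generic $a,t$. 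Since $\Phi(x):=\sum_{n\ge0}c_n x^n$ converges for $|x|<1$, Abel's theorem yields
$$\Sigma = \lim_{x\to1^-}\Big(\frac{c_\infty}{1-x} - \Phi(x)\Big), \qquad \Phi(x) = \sum_{n\ge0}\frac{(t;q)_n}{(a;q)_n}x^n = {}_2\phi_1(t,q;a;q,x).$$

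First I would apply a Heine-type transformation (available through Fine's relations such as \eqref{eqn:fine6.3}) to $\Phi$, putting it in the form
$$\Phi(x) = \frac{(q;q)_\infty (tx;q)_\infty}{(a;q)_\infty (x;q)_\infty}\,{}_2\phi_1(a/q,\,x;\,tx;\,q,\,q),$$
in which $(x;q)_\infty = (1-x)(xq;q)_\infty$ carries the only singularity at $x=1$. Writing $\Phi(x)=B(x)/(1-x)$, the residual series degenerates to $1$ at $x=1$, because its terms with $n\ge1$ contain the vanishing factor $(x;q)_n|_{x=1}$. Hence $B(1)=c_\infty$, the pole cancels $c_\infty/(1-x)$ exactly, and the limit collapses to a single derivative,
$$\Sigma = B'(1), \qquad B(x) = \frac{(q;q)_\infty (tx;q)_\infty}{(a;q)_\infty (xq;q)_\infty}\,{}_2\phi_1(a/q,x;tx;q,q).$$

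Next I would compute $B'(1)$ by the product rule, summing the logarithmic derivatives of the three $x$-dependent factors weighted by $B(1)=c_\infty$. Each Pochhammer prefactor produces a Lambert-type series through $\frac{d}{dx}\log(\beta x;q)_\infty = -\sum_{j\ge0}\frac{\beta q^j}{1-\beta xq^j}$: the factor $(tx;q)_\infty$ contributes $-\sum_{j\ge0}\frac{tq^j}{1-tq^j}$ and the factor $(xq;q)_\infty^{-1}$ contributes $+\sum_{j\ge1}\frac{q^j}{1-q^j}$, which are two of the four divisor-type sums on the right, each correctly scaled by $c_\infty$. Differentiating the residual ${}_2\phi_1$ at $x=1$, where by the product rule only $\frac{d}{dx}(x;q)_n|_{x=1}=-(q;q)_{n-1}$ survives, and simplifying via $(q;q)_n=(q;q)_{n-1}(1-q^n)$, leaves the single sum $-c_\infty\sum_{n\ge1}\frac{(a/q;q)_n}{(t;q)_n}\frac{q^n}{1-q^n}$.

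The main obstacle is the final identification of this residual single sum with the remaining pieces of the right-hand side, namely
$$-c_\infty\sum_{n\ge1}\frac{(a/q;q)_n}{(t;q)_n}\frac{q^n}{1-q^n} = \sum_{n\ge1}\frac{(q/a;q)_n}{(q/t;q)_n}\Big(\frac{a}{t}\Big)^n + c_\infty\Big(\sum_{n\ge1}\frac{q^nt^{-1}}{1-q^nt^{-1}} - \sum_{n\ge0}\frac{at^{-1}q^n}{1-at^{-1}q^n}\Big).$$
Here the natural route is to recognize $\sum_n\frac{(a/q;q)_n}{(t;q)_n}z^n = {}_2\phi_1(a/q,q;t;q,z)$, expand $q^n/(1-q^n)=\sum_{m\ge1}q^{mn}$, and apply one further Heine/$q$-Euler transformation whose reciprocal parameters $q/a,\,q/t$ produce exactly the finite hypergeometric tail $\sum_n\frac{(q/a;q)_n}{(q/t;q)_n}(a/t)^n$, while the infinite-product prefactors of that transformation contribute, again as logarithmic derivatives, the two remaining Lambert series with bases $t^{-1}$ and $at^{-1}$. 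The delicate point — and the step I expect to absorb essentially all of the work — is the bookkeeping that pins down these last two bases and reconciles the constant ($n=0$) terms, so that the decomposition emerges in precisely the stated form; the preceding reductions are routine.
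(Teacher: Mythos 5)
A point of reference first: the paper does not prove this statement at all --- it is quoted verbatim as Theorem 1 of \cite{AJO} and used as a black box, so there is no in-paper argument to compare yours against. Judged on its own terms, your strategy is the standard and correct one (Abel limit plus differentiation of a two-variable transformation, which is essentially how Andrews--Jim\'enez-Urroz--Ono argue), and the first two thirds check out: the tail bound $c_\infty-c_n=O(q^n)$ via \eqref{eqn:qbinomial}, the reduction $\Sigma=B'(1)$ after Heine's transformation, the evaluation $B(1)=c_\infty$ (only the $n=0$ term of the residual $_2\phi_1$ survives at $x=1$), and the product-rule computation giving $\Sigma=c_\infty\bigl(\sum_{n\ge1}\tfrac{q^n}{1-q^n}-\sum_{n\ge0}\tfrac{tq^n}{1-tq^n}\bigr)-c_\infty\sum_{n\ge1}\tfrac{(a/q;q)_n}{(t;q)_n}\tfrac{q^n}{1-q^n}$. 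I verified this intermediate formula is consistent with the theorem (e.g.\ both sides equal $\tfrac{a-t}{1-a}$ in the $q\to0$ limit, where the apparent singularity of $(a/q;q)_n$ is cancelled by $q^n$).

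The genuine gap is the final displayed identity, which you correctly isolate but do not prove. It is not bookkeeping: it is a $q$-series identity of essentially the same depth as the theorem itself, since it carries the entire duality $(t,a)\mapsto(q/a,q/t)$ responsible for the term $\sum_{n\ge1}\tfrac{(q/a;q)_n}{(q/t;q)_n}(a/t)^n$ together with the two Lambert series in $t^{-1}$ and $at^{-1}$. Worse, the mechanism you sketch for it does not cohere: once you expand $q^n/(1-q^n)=\sum_{m\ge1}q^{mn}$ and specialize the resulting $_2\phi_1$ at $z=q^m$, there is no free variable left in which to take the ``logarithmic derivatives'' that you claim will produce the remaining two Lambert series; those sums would have to come from summing closed-form evaluations over $m$, which is a different and substantially harder computation that you have not set up. So the proof is incomplete at precisely the step you flag as absorbing ``essentially all of the work.'' A more promising route is to reverse-engineer the two-variable identity from the answer: choose the transformation of $\Phi(x)$ so that the $x$-dependent prefactor is $\frac{(tx;q)_\infty\,(axt^{-1};q)_\infty}{(xq;q)_\infty\,(xqt^{-1};q)_\infty}$ (whose logarithmic derivative at $x=1$ yields all four Lambert series at once) and so that the residual series carries an $(x;q)_n$ factor against coefficients $\frac{(q/a;q)_n}{(q/t;q)_n}(a/t)^n$, whose derivative at $x=1$ produces the finite hypergeometric tail directly; the Heine transform with second parameter $q$ that you chose does not put the identity in that form.
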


\begin{theorem}[Theorem 2 of \cite{AJO}]\label{thm:AJO2}
\begin{align*}
\sum_{n=0}^\infty & \( \frac{(a;q)_\infty(b;q)_\infty}{(q;q)_n (c;q)_\infty} - \frac{(a;q)_n(b;q)_n}{(q;q)_n (c;q)_n}\)  \\
&=  \frac{(a;q)_\infty (b;q)_\infty}{(q;q)_\infty (c;q)_\infty}
 \( \sum_{n=1}^\infty \frac{q^n}{1-q^n}  - \sum_{n=0}^\infty \frac{aq^n}{1-aq^n} - 
 \sum_{n=1}^\infty \frac{(c/b;q)_n b^n}{(a;q)_n (1-q^n)}\).
\end{align*}
\end{theorem}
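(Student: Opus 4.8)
The plan is to read the left-hand side as a \emph{sum of tails} of a basic hypergeometric series and to recover the answer from the Laurent expansion of a generating function at $z=1$. Write $L_n := \frac{(a;q)_n(b;q)_n}{(q;q)_n(c;q)_n}$ for the $n$-th summand and $L_\infty := \frac{(a;q)_\infty(b;q)_\infty}{(q;q)_\infty(c;q)_\infty}$ for its limit, so the left-hand side is $\sum_{n\ge0}(L_\infty - L_n)$. Introducing a variable $z$, I would set $\Phi(z):=\sum_{n\ge0}L_n z^n = {}_2\phi_1(a,b;c;q,z)$, note $\sum_{n\ge0}L_\infty z^n = \frac{L_\infty}{1-z}$, and hence write $\frac{L_\infty}{1-z}-\Phi(z) = \sum_{n\ge0}(L_\infty - L_n)z^n$. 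Since $L_\infty - L_n\to0$ geometrically, the series converges at $z=1$ and Abel's theorem identifies the left-hand side with $\lim_{z\to1^-}\big(\frac{L_\infty}{1-z}-\Phi(z)\big)$. Both terms blow up like $(1-z)^{-1}$; the content of the theorem is that the poles cancel and the surviving constant is the stated triple sum.

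To expose the behaviour of $\Phi(z)$ near $z=1$ I would apply Heine's classical ${}_2\phi_1$ transformation, which rewrites $\Phi(z) = \frac{(b;q)_\infty(az;q)_\infty}{(c;q)_\infty(z;q)_\infty}\,\Psi(z)$, where $\Psi(z):={}_2\phi_1(c/b,z;az;q,b)=\sum_{m\ge0}\frac{(c/b;q)_m(z;q)_m}{(q;q)_m(az;q)_m}b^m$. This is the decisive step: it pulls out the singular factor $\frac{1}{(z;q)_\infty}$ and manufactures exactly the argument $c/b$ and base $b$ occurring in the third sum of the target. Setting $\epsilon=1-z$ and using $(z;q)_\infty=(1-z)(zq;q)_\infty$ together with the logarithmic derivative of $(zq;q)_\infty$ at $z=1$ yields $\frac{1}{(z;q)_\infty}=\frac{1}{(q;q)_\infty}\big(\frac{1}{\epsilon}-\sum_{n\ge1}\frac{q^n}{1-q^n}+O(\epsilon)\big)$, so the first sum $\sum_{n\ge1}\frac{q^n}{1-q^n}$ appears already from the subleading part of the singular factor.

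It then remains to expand the regular factor $P(z):=\frac{(b;q)_\infty(az;q)_\infty}{(c;q)_\infty}$ times $\Psi(z)$ to first order in $\epsilon$. Here $\Psi(1)=1$ because every $m\ge1$ term carries $(1;q)_m=0$, so $P(1)\Psi(1)=\frac{(a;q)_\infty(b;q)_\infty}{(c;q)_\infty}$ cancels the residual pole and confirms finiteness. The logarithmic derivative of $P$ depends on $z$ only through $(az;q)_\infty$ and contributes the second sum $-\sum_{n\ge0}\frac{aq^n}{1-aq^n}$. For $\Psi'(1)$ the only $z$-dependence sits in $\frac{(z;q)_m}{(az;q)_m}$; since $(z;q)_m$ vanishes at $z=1$ for $m\ge1$, writing $(z;q)_m=(1-z)(zq;q)_{m-1}$ shows that the derivative of $\frac{(z;q)_m}{(az;q)_m}$ at $z=1$ equals $-\frac{(q;q)_{m-1}}{(a;q)_m}$, and the ratio $\frac{(q;q)_{m-1}}{(q;q)_m}=\frac{1}{1-q^m}$ produces precisely the third sum $-\sum_{m\ge1}\frac{(c/b;q)_m b^m}{(a;q)_m(1-q^m)}$. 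Collecting the three contributions with the common factor $L_\infty$ gives the identity.

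The main obstacle is this last $0\cdot\infty$ differentiation at $z=1$: the factor $(z;q)_m$ degenerates while the naive logarithmic derivative diverges, and it is exactly the careful cancellation that converts an ordinary $q$-series term into the divisor-weighted $\frac{1}{1-q^m}$ form. Getting this, matching the subleading constant of $\frac{1}{(z;q)_\infty}$ against $\sum_{n\ge1}\frac{q^n}{1-q^n}$, and checking that the two simple poles truly cancel (so the Abel limit exists) is where essentially all the work lies. As a guard against sign and parameter slips I would expand both sides as power series in $q$ in a reduced case such as $a=0$; the coefficients of $q^0$ and $q^1$ already determine the structure and would flag any misplaced term.
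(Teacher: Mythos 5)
Your argument is correct, but there is nothing internal to compare it with: the paper does not prove this statement, it imports it verbatim as Theorem 2 of Andrews--Jim\'enez-Urroz--Ono \cite{AJO}. Measured against that source, your route is essentially the same one: represent the sum of tails as the Abel limit $\lim_{z\to 1^-}\bigl(\tfrac{L_\infty}{1-z}-{}_2\phi_1(a,b;c;q,z)\bigr)$, use Heine's transformation to isolate the singular factor $1/(z;q)_\infty$, and read off the constant term of the expansion in $\epsilon=1-z$. This is the differentiate-at-the-boundary technique of \cite{AJO}, later systematized by Andrews--Freitas as the ``extension of Abel's lemma'' (Theorem \ref{thm:AF} above, whose second identity the authors remark is equivalent to the present theorem). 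Your three bookkeeping steps all check out: $1/(z;q)_\infty=\tfrac{1}{(q;q)_\infty}\bigl(\epsilon^{-1}-\sum_{n\ge1}\tfrac{q^n}{1-q^n}+O(\epsilon)\bigr)$; the factor $(az;q)_\infty$ contributes $-\sum_{n\ge0}\tfrac{aq^n}{1-aq^n}$; and since $(z;q)_m$ vanishes at $z=1$ for $m\ge1$, only the numerator differentiates, with $\tfrac{d}{dz}(z;q)_m\big|_{z=1}=-(q;q)_{m-1}$ and $(q;q)_{m-1}/(q;q)_m=1/(1-q^m)$ yielding the third sum, so the limit is $L_\infty\bigl(S_1-S_2-S_3\bigr)$ as claimed.

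One point you corrected silently but should flag: as printed in this paper, the first term of the left-hand side reads $\frac{(a;q)_\infty(b;q)_\infty}{(q;q)_n(c;q)_\infty}$, with $(q;q)_n$ rather than $(q;q)_\infty$. Taken literally this makes the identity false --- for instance $a=b=c=0$ would give a left side identically $0$ while the right side is $\frac{1}{(q;q)_\infty}\sum_{n\ge1}\frac{q^n}{1-q^n}$ --- so it is a typo for $(q;q)_\infty$, i.e.\ for your $L_\infty$, and your reading proves the intended (and original \cite{AJO}) statement. The only loose ends in your write-up are routine: Abel's theorem applies because $L_\infty-L_n=O(q^n)$; Heine's transformation and the termwise first-order expansion of $\Psi$ at $z=1$ require $|b|<1$ and parameters away from poles, with the general case following by analytic continuation. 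Your proposed sanity check at $a=0$ is a reasonable safeguard, and indeed recovers the $F_2$, $F_3$ specializations used in Theorem \ref{thm:modular}.
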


\section{Weight $3/2$ mock theta functions}\label{sec:mock}
In this section we prove Theorem \ref{thm:mock}. 
We will need the following series 
\begin{align}
\ol{f}(q) =& 1+ 2\sum_{n=1}^\infty \frac{q^{\frac{1}{2}n(n+1)}}{(-q;q)_n(1+q^n)}  \\
=& 1 + 2q - 4q^2 + 8q^3 - 10q^{4} + 8q^{6} + 16 q^7 - 20q^8 + 10q^{9} - 8q^{10} + 24q^{11} - \cdots \nonumber \\ 
N^o(0,-1,1;q) =& \sum_{n=0}^\infty \frac{(-1)^n q^{(n+1)^2}}{(q;q^2)_{n+1} (1-q^{2n+1})} \\
=& q + 2q^2 + 3q^3 + 3q^4 + 4q^5 + 5q^6 + 4q^7 + 5q^8 + 7q^9 + 5q^{10} + 6q^{11} + 8q^{12} + \cdots  \nonumber 
\end{align}

The following is contained in Theorem 1.1 of \cite{BL1}. 
\begin{theorem}[Theorem 1.1 of \cite{BL1}]\label{thm:BL1}
With $q = e^{2\pi i z}$,  Let $\frac{\eta(z)^2}{\eta(2z)} = \sum_{n=0}^\infty b_n q^n$.  Then 
$$\ol{f}(q) - \frac{2}{\sqrt{\pi}} \sum_{n} b_n n^{\frac{1}{2}} \Gamma\(-\frac{1}{2}, 4\pi ny\) q^{-n}$$
is a non-holomorphic modular form of weight $3/2$. 
\end{theorem}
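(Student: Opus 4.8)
The plan is to prove that $\ol f(q)$ is mock modular of weight $3/2$ by realizing it, through Zwegers's theory \cite{zwegers}, as a combination of building blocks whose non-holomorphic completions are known, and then reading off the shadow. For a weight-$3/2$ object whose shadow is the weight-$1/2$ unary theta $\frac{\eta(z)^2}{\eta(2z)} = \sum_{n\in\Z}(-1)^n q^{n^2}$, there are two natural families of building blocks: Appell--Lerch sums $m(x,q,z)$ together with ordinary theta functions, or a Hecke-type indefinite theta function of signature $(2,1)$, which has weight $3/2$. I would try the Appell--Lerch route first.

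The first and decisive step is to rewrite the hypergeometric series in such a form. The factor $1/(1+q^n) = 1/(1-(-1)q^n)$ sitting next to the Gaussian exponent $q^{\frac12 n(n+1)}$ is the unmistakable signature of an Appell--Lerch sum. Starting from the definition of $\ol f$, I would apply Fine's transformations \eqref{eqn:fine6.3} and \eqref{eqn:fine12.2}, the $q$-binomial theorem \eqref{eqn:qbinomial}, and a partial-fraction decomposition of $1/\big((-q;q)_n(1+q^n)\big)$, aiming at an identity that expresses $\ol f(q)$ through Appell--Lerch sums and theta functions whose combined weight is $3/2$.

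Granting such a representation, the modularity is essentially automatic: replacing each Appell--Lerch sum by its Zwegers completion produces a real-analytic modular form of weight $3/2$ whose holomorphic part is $\ol f(q)$. The completion is controlled by Zwegers's function $R$, whose Fourier expansion is given in terms of the complementary error function; since $\mathrm{erfc}$ is an incomplete Gamma function, the non-holomorphic part emerges in the standard shape $\sum_{n<0} c^-(n)\,\Gamma(-\tfrac12, 4\pi|n|y)\,q^{n}$, where the index $-\tfrac12 = 1-\tfrac32$ is exactly that of a weight-$3/2$ harmonic form and $c^-(-n)$ is proportional to $\overline{b_n}\,n^{1/2}$ with $\sum_n b_n q^n = \frac{\eta(z)^2}{\eta(2z)}$. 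Checking the eta-quotient identity $\frac{\eta(z)^2}{\eta(2z)} = \sum_{n\in\Z}(-1)^n q^{n^2}$ (a routine Jacobi-triple-product computation) and matching the constant $\tfrac{2}{\sqrt\pi}$ then recovers exactly the completion in the statement.

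The main obstacle is the first step: finding the precise chain of $q$-series manipulations that reduces $\ol f(q)$ to Zwegers's normal form with the correct total weight. Once this reduction is available, the weight-$3/2$ modularity and the identification of the shadow follow mechanically from \cite{zwegers}. The delicate points are pinning down the Appell--Lerch parameters and accompanying theta functions so that the weights add up to $3/2$ and the residual theta pieces are genuinely modular rather than themselves mock, and confirming that the anti-holomorphic part assembled from $R$ collapses onto the single unary shadow $\frac{\eta(z)^2}{\eta(2z)}$ rather than a larger combination.
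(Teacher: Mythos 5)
First, a point of order: the paper does not prove this statement at all --- it is imported verbatim as Theorem 1.1 of \cite{BL1} and used as a black box (together with Proposition \ref{prop:mock_series}) to conclude that $M_1(q)=\tfrac12\ol{f}(q)$ is a mock theta function. So there is no internal argument to compare yours against; the comparison has to be with the proof in \cite{BL1} itself. Measured against that, your overall strategy --- find an Appell--Lerch representation of $\ol{f}(q)$ and complete it via Zwegers's theory --- is the right shape: $\ol{f}(q)$ is the overpartition rank generating function specialized at $z=-1$, and the identity that makes everything work is the $z=-1$ case of the two-variable rank formula, namely
\begin{equation*}
\ol{f}(q)\;=\;\frac{(-q;q)_\infty}{(q;q)_\infty}\left(1+8\sum_{n=1}^{\infty}\frac{(-1)^n q^{n^2+n}}{(1+q^n)^2}\right),
\end{equation*}
after which the completion of the Lerch-type sum produces the nonholomorphic Eichler integral of $\eta(z)^2/\eta(2z)$ with the stated $\Gamma(-\tfrac12,4\pi ny)$ coefficients.

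The genuine gap is that this decisive identity is exactly the step you defer, calling it ``the main obstacle'' to be attacked by some unspecified combination of \eqref{eqn:fine6.3}, \eqref{eqn:fine12.2}, \eqref{eqn:qbinomial} and partial fractions. Without it nothing downstream is determined: you cannot name the Appell--Lerch parameters, so neither the weight nor the shadow is actually pinned down, and the claim that the anti-holomorphic part ``collapses onto'' $\eta(z)^2/\eta(2z)$ remains an assertion. Moreover, your own worry about the weight bookkeeping is well founded and is not resolved by the plan as stated: a reduction to a single Zwegers $\mu$-function yields a weight $\tfrac12$ object (exactly as for Ramanujan's $f(q)$, whose series differs from $\ol{f}(q)$ only by the factor $1/(1+q^n)$). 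The jump to weight $\tfrac32$ comes from the \emph{double} pole $(1+q^n)^{-2}$ in the display above, i.e.\ from a derivative of the Appell--Lerch sum in its elliptic variable, which is a different (and more delicate) completion than the one you describe as ``mechanical.'' So the proposal is a reasonable plan consistent with how \cite{BL1} actually proceeds, but as written it is a plan and not a proof; to complete it you must establish the displayed identity (or an equivalent Hecke-type indefinite theta representation) and then carry out the weight-raising completion, at which point you are essentially reproving \cite{BL1}.
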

The following theorem is contained in Theorem 4.5 (3) of \cite{ABL}. 
\begin{theorem}[Theorem 4.5 (3) of \cite{ABL}]\label{thm:ABL}
With $q =e^{2\pi i z}$, let $\frac{\eta^2(2z)}{\eta(z)} = q^{\frac{1}{8}} \sum_{n=0}^\infty b_n q^n$.
Then 
$$q^{-\frac{1}{8}} N^o(0, -1, 1;q)  - \frac{1}{\sqrt{2\pi}} \sum_{n} b_n n^{\frac{1}{2}} 
 \Gamma\(-\frac{1}{2}, 4\pi ny\) q^{-n}$$
is a non-holomorphic modular form of weight $3/2$. 
\end{theorem}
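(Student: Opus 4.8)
The plan is to prove that $q^{-1/8}N^o(0,-1,1;q)$ is the holomorphic part of a harmonic weak Maass form of weight $3/2$ whose shadow is a constant multiple of $\frac{\eta^2(2z)}{\eta(z)}$; the displayed non-holomorphic completion is then exactly the associated Eichler integral. This parallels the companion Theorem \ref{thm:BL1}, and one may even hope to deduce it directly from that result by an Atkin--Lehner involution: the eta quotients $\frac{\eta^2(z)}{\eta(2z)}$ and $\frac{\eta^2(2z)}{\eta(z)}$ are interchanged (up to a weight $1/2$ automorphy factor) by $W_2\colon z\mapsto -1/(2z)$ on $\Gamma_0(2)$, consistent with the cusp behavior recorded in Theorem \ref{thm:mock}(A).

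First I would rewrite the defining series
$$N^o(0,-1,1;q)=\sum_{n=0}^\infty \frac{(-1)^n q^{(n+1)^2}}{(q;q^2)_{n+1}(1-q^{2n+1})}$$
as an Appell--Lerch sum. Using Rogers--Fine type transformations from Section \ref{sec:background}, in particular \eqref{eqn:rogers}, \eqref{eqn:fine12.2} and \eqref{eqn:entry1.7.2}, together with the geometric expansion of $1/(1-q^{2n+1})$, one converts the sum into a Hecke-type double sum, and then into a specialization of a level-$\ell$ Appell function $A_\ell(u,v;z)$ at torsion values of the elliptic variables $u,v$ (the arguments $(0,-1,1)$ and the superscript $o$ encode precisely this torsion specialization). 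This is the same mechanism that expresses the rank-type generating functions of \cite{ABL} in Appell--Lerch form.

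Next I would invoke Zwegers' completion of higher-level Appell functions: adding the canonical non-holomorphic correction built from the $R$-function makes $\widehat{A}_\ell$ transform as a vector-valued Jacobi form, so its torsion specialization, divided by the appropriate theta quotient, is a harmonic weak Maass form of weight $3/2$ on $\Gamma_0(2)$. Concretely, one checks that
$$\widehat{H}(z):=q^{-1/8}N^o(0,-1,1;q)-\frac{1}{\sqrt{2\pi}}\sum_n b_n\, n^{1/2}\,\Gamma\!\left(-\tfrac12,4\pi n y\right)q^{-n}$$
transforms with weight $3/2$ by pushing the transformations of $\widehat{A}_\ell$ through the specialization. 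The shadow is then extracted by applying $\xi_{3/2}=2iy^{3/2}\,\overline{\partial_{\bar z}}$, which annihilates the holomorphic part and sends the Eichler integral to a weight $1/2$ theta series; a direct Fourier computation gives $\xi_{3/2}\widehat{H}\propto\sum_n b_n q^n$, which I would identify with $\frac{\eta^2(2z)}{\eta(z)}=\sum_{n\in\Z}q^{2(n+1/4)^2}$ using the expansion established in Theorem \ref{thm:mock}(A). The incomplete gamma $\Gamma(1-k,\cdot)=\Gamma(-\tfrac12,\cdot)$ and the power $n^{k-1}=n^{1/2}$ are exactly those of the non-holomorphic Eichler integral attached to a weight $k=\tfrac32$ form, and the passage to the normalization of the present statement is the elementary recurrence $\Gamma(s+1,x)=s\,\Gamma(s,x)+x^s e^{-x}$.

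The main obstacle is the first step: massaging the given $q$-hypergeometric series into a clean Appell--Lerch sum with correctly identified torsion parameters, and tracking the multiplier system and the $q^{1/8}$ shift so that the completion transforms under exactly $\Gamma_0(2)$ rather than a larger-level group. Once the series is in Appell form, the modularity of the completion and the identification of the shadow follow Zwegers' framework essentially formally. The Atkin--Lehner route would instead shift the burden onto identifying the expansion of the completed $\overline{f}$ of Theorem \ref{thm:BL1} at the cusp $0$ with $q^{-1/8}N^o(0,-1,1;q)$, which is a purely $q$-series identity.
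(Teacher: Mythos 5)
The first thing to note is that this paper contains no proof of the statement: Theorem \ref{thm:ABL} is imported verbatim as Theorem 4.5 (3) of \cite{ABL}, and the proof of Theorem \ref{thm:mock} simply cites it together with Proposition \ref{prop:mock_series}. Your outline is therefore being measured against the argument in the cited source rather than against anything in this paper, and in spirit it agrees with that argument: realize the series as a specialization of an Appell--Lerch-type sum, complete it with Zwegers' nonholomorphic correction, and identify the shadow by applying $\xi_{3/2}$. The strategy is the right one, and your bookkeeping of the incomplete Gamma factor and the exponent $n^{1/2}$ for a weight $3/2$ form with weight $1/2$ shadow is consistent with the statement.

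As a proof, however, there are two genuine gaps. The central step --- rewriting $\sum_{n\ge 0}(-1)^n q^{(n+1)^2}/((q;q^2)_{n+1}(1-q^{2n+1}))$ in Appell--Lerch form --- is asserted rather than performed, and it is not a routine application of \eqref{eqn:rogers}, \eqref{eqn:fine12.2}, \eqref{eqn:entry1.7.2}: the extra factor $1/(1-q^{2n+1})$ is exactly what raises the weight from $1/2$ to $3/2$ (compare $M_2$ with $\sigma^*$), and it is most naturally produced by differentiating a two-variable rank-type generating function in its elliptic variable before specializing, which is how the cited source proceeds; a bare geometric expansion of $1/(1-q^{2n+1})$ will not by itself land on a level-$\ell$ Appell function with identified torsion arguments. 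Second, the Atkin--Lehner shortcut proves less than you need: $W_2$ interchanges the two shadows, but two harmonic weak Maass forms of weight $3/2$ with the same shadow differ by a weakly holomorphic modular form, so identifying $q^{-1/8}N^o(0,-1,1;q)$ with the expansion of the completion of $\ol{f}$ at the cusp $0$ requires either the ``purely $q$-series identity'' you defer or separate control of principal parts at both cusps; it does not follow from the cusp data recorded in Theorem \ref{thm:mock} (A). Neither gap is a wrong turn, but neither closes formally, so as written this is a plan rather than a proof.
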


We begin with a preliminary proposition 
\begin{proposition}\label{prop:mock_series}
In the notation from Section \ref{sec:results}, for $\abs{q}<1$,  
we have 
$$M_1(q) = \frac{1}{2} + \sum_{n=1}^\infty \frac{q^{\frac{1}{2}n(n+1)} }{(-q;q)_n (1+q^n)} = \frac{1}{2} \ol{f}(q)$$
and 
$$M_2(q) = \sum_{n=0}^\infty \frac{(-1)^{n+1} q^{(n+1)^2}}{(q;q^2)_{n+1} (1-q^{2n+1})} 
= - N^o(0, -1, 1;q).$$
\end{proposition}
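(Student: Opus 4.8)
The plan is to establish each of the two identities by transforming the given $q$-hypergeometric series into the stated ``completed'' forms using Fine's machinery (equations \eqref{eqn:fine6.3}–\eqref{eqn:fine2.4} and \eqref{eqn:fine12.2}) together with the Ramanujan identities \eqref{eqn:ramanujanFalse} and \eqref{eqn:entry1.7.2} recorded in Section \ref{sec:background}. The first identity reduces $M_1(q)$ to $\tfrac12\ol{f}(q)$, and the second identifies $M_2(q)$ with $-N^o(0,-1,1;q)$; since the desired right-hand sides are exactly the series defined at the start of this section, it suffices to manipulate the defining sums for $M_1$ and $M_2$ into those shapes.

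For $M_1(q)$, I would first rewrite the summand $\tfrac{(-1)^n(q;q)_n}{(-q;q)_n}$ in terms of Fine's $F(a,b;t)$. Note that the alternate expression given for $M_1$, namely $1 - \tfrac{1-q}{2}\sum_{n\ge 0}\tfrac{(q;q)_n(-q)^n}{(-q;q)_n}$, is essentially a value of $(1-t)F(a,b;t)$ with $(a,b,t)$ chosen so that the summand matches $\tfrac{(q;q)_n}{(-q;q)_n}(-q)^n$; concretely one takes $a=q$ (so $aq/b$ matches up) and a suitable $b,t$ making $\tfrac{(b/a)_n}{(bq)_n(tq)_n}(-at)^n q^{\frac12 n(n+1)}$ in \eqref{eqn:fine12.2} collapse to the target summand. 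The goal is to produce the series $\sum_{n\ge1}\tfrac{q^{\frac12 n(n+1)}}{(-q;q)_n(1+q^n)}$, which is recognizable as $\tfrac12(\ol{f}(q)-1)$ after pulling out the $n=0$ term. The factor $(1+q^n)=(-q;q)_n/(-q;q)_{n-1}$ in the denominator signals that a shift of index combined with the $b\to -a$ specialization in \eqref{eqn:fine14.31} is the right move; I expect the cleanest route is to apply \eqref{eqn:fine6.3} once to swap the roles of the parameters and then read off the theta-like numerator $q^{\frac12 n(n+1)}$ directly.

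For $M_2(q)$, the summand $\tfrac{q^{n+1}(q^2;q^2)_n}{(q;q^2)_{n+1}}$ lives naturally in the base $q^2$, so I would replace $q$ by $q^2$ in the relevant Fine identities and match parameters to the partial-theta identity \eqref{eqn:entry1.7.2}, which after the substitution $b\mapsto -q$, $a\mapsto$ (a suitable power of $q$) rewrites a sum with denominator $(q;q^2)_{n+1}$ as one with numerator $(-1)^n q^{(n+1)^2}$ and denominator $(q;q^2)_{n+1}(1-q^{2n+1})$. That target is exactly $N^o(0,-1,1;q)$, and tracking the overall sign $(-1)^{n+1}$ yields the claimed $M_2(q)=-N^o(0,-1,1;q)$. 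The main obstacle, and where I would spend the most care, is the bookkeeping of parameter specializations and the emergence of the extra $(1\pm q^{\cdots})$ denominator factor: one must verify that the boundary/$n=0$ terms and the prefactors $1-t$ from \eqref{eqn:fine2.4} or \eqref{eqn:fine12.2} are handled so that the finite series match coefficient-by-coefficient, rather than merely up to an overall constant. I would confirm the result by comparing the first several $q$-expansion coefficients of both sides (both series are already expanded in the text) as a consistency check once the formal identity is in place.
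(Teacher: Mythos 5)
Your overall instinct---push both series through Fine's machinery, and in particular through the Rogers--Fine identity \eqref{eqn:fine12.2}---is exactly the paper's strategy, but the details are not pinned down and the plan for $M_2$ as written would fail. For $M_1$ the correct specialization is $M_1(q)=F(1,-1;-1)$ (so $a=1$, $b=t=-1$, not $a=q$), and a single application of \eqref{eqn:fine12.2} gives
$$2M_1(q)=\sum_{n\ge0}\frac{(-1;q)_n}{(-q;q)_n^2}\,q^{\frac12 n(n+1)}=1+2\sum_{n\ge1}\frac{q^{\frac12 n(n+1)}}{(-q;q)_n(1+q^n)},$$
using $(-1;q)_n=2(-q;q)_n/(1+q^n)$ for $n\ge1$; no detour through \eqref{eqn:fine6.3} or \eqref{eqn:fine14.31} is needed, and neither of those identities produces the $(1+q^n)$ factor you are after.

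The genuine gap is in your treatment of $M_2$. Entry 1.7.2, i.e.\ \eqref{eqn:entry1.7.2}, cannot be specialized to the claimed identity: to obtain the denominator $(q;q^2)_{n+1}$ on its right-hand side you are forced to take $a=1$, and then you would need $(-b)^n(-q/b;q)_n=q^{n+1}(q;q)_n$, that is $\prod_{j=1}^{n}(-b-q^{j})=q\prod_{j=1}^{n}(q-q^{j+1})$, which holds for no $b$ (and the natural choice $b=-q$ kills the product outright); moreover the other side of \eqref{eqn:entry1.7.2} carries triangular exponents $q^{\frac12 n(n+1)}$, not the squares $q^{(n+1)^2}$ you need. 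The identity that actually works is again \eqref{eqn:fine12.2}, applied in base $q^2$ with $a=1$ and $b=t=q$, which gives $(1-q)\sum_{n\ge0}\frac{(q^2;q^2)_n q^n}{(q^3;q^2)_n}=\sum_{n\ge0}\frac{(q;q^2)_n}{(q^3;q^2)_n^2}(-1)^n q^{n^2+2n}$; multiplying both sides by $q/(1-q)^2$ and absorbing the factors $(1-q)$ and $(1-q^{2n+1})$ into the Pochhammer symbols yields $M_2(q)=\sum_{n\ge0}(-1)^{n}q^{(n+1)^2}/\bigl((q;q^2)_{n+1}(1-q^{2n+1})\bigr)$, the claimed Appell--Lerch-type form. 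Checking initial $q$-expansion coefficients, as you propose, is a useful sanity check but not a substitute for identifying the correct transformation.
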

\begin{proof}
Both results follow from \eqref{eqn:fine12.2}.
In the first case we take $a = 1$ and $b=t=-1$. The result follows immediately. 
In the second case we first send $q\mapsto q^2$, then take $a=1$, $b=q$, and 
$t = q$.  We obtain 
$$(1-q) \sum_{n=0}^\infty \frac{(q^2;q^2)_n q^n}{(q^3;q^2)_n} 
= \sum_{n=0}^\infty \frac{(q; q^2)_n}{(q^3;q^2)_n(q^3;q^2)_n} (-1)^n q^{n^2+2n}.$$ 
Multiplying both sides of the expression by $\frac{q}{(1-q)^2}$ gives the result. 
\end{proof}

\begin{proof}[Proof of Theorem \ref{thm:mock}]
The claims that $M_1(q)$ and $M_2(q)$ are mock theta functions follow from 
Proposition \ref{prop:mock_series} and 
Theorems \ref{thm:BL1} and \ref{thm:ABL}. 

The results concerning renormalization appear in Theorem 3 of \cite{AJO}.  
\end{proof}

\begin{remark}
It may seem there are other natural  choices for $\cS[M_2]$ and $\cG[M_2]$. In particular, 
$$\frac{(q^2;q^2)_\infty}{(q;q^2)_\infty} =  \sum_{n=0}^\infty q^{\frac{1}{2}n(n+1)} = \frac{1}{2} \sum_{n\in \Z} q^{\frac{1}{2} n(n+1)}$$ is a modular form. Thus we could have taken $\cS[M_2](q) = 
\sum_{n=0}^\infty (n+\alpha) q^{\frac{1}{2}n(n+1)}$ for any constant $\alpha$. However, 
since the ``half-derivative'' of $\sum_{n\in \Z} q^{\frac{1}{2}(n+\frac{1}{2})^2} = 
2\sum_{n=0}^\infty q^{\frac{1}{2}(n+\frac{1}{2})^2}$ is 
$\sum_{n=0}^\infty (2n+1) q^{\frac{1}{2}\( n+\frac{1}{2}\)^2}$ it is clear we should take the shadow to 
be given as in the theorem. 

There are similar choices for $\cS[M_1]$ because 
$$\frac{(q;q)_\infty}{(-q;q)_\infty} = \sum_{n\in \Z} (-1)^n q^{n^2}.$$
\end{remark}

\section{Weight $1/2$  theta functions}\label{sec:modular}
In this section we prove Theorem \ref{thm:modular}. 
The following result of  Bringmann and Kane 
\cite{BK2} is used to establish Theorem \ref{thm:modular}. 
\begin{theorem}\label{thm:BK2}
In the notation of Theorem \ref{thm:modular}
$$-\sum_{n=0}^\infty \( \frac{(-q)_n}{(q)_n (1-q^{2n+1})} - \frac{(-q)_\infty}{(q)_\infty} \) =  \frac{(-q)_\infty}{(q)_\infty} \( 2D_2(q) - h(q)\)$$
where $D_2(q) = \sum_{n=1}^\infty d_o(n) q^n$ and $d_o(n)$ is the number of odd divisors of $n$. 
\end{theorem}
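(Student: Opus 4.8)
The plan is to isolate the awkward factor $1/(1-q^{2n+1})$ by the elementary splitting $\frac{1}{1-q^{2n+1}} = 1 + \frac{q^{2n+1}}{1-q^{2n+1}}$, which decomposes each summand as
$$\frac{(-q)_n}{(q)_n(1-q^{2n+1})} - \frac{(-q)_\infty}{(q)_\infty} = \left(\frac{(-q)_n}{(q)_n} - \frac{(-q)_\infty}{(q)_\infty}\right) + \frac{(-q)_n}{(q)_n}\cdot\frac{q^{2n+1}}{1-q^{2n+1}}.$$
Summing over $n\ge 0$ and negating, the first group becomes a classical sum of tails (which I expect to yield the $2D_2(q)$ term) and the second an auxiliary series (which I expect to yield the $-h(q)$ term). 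Both pieces converge absolutely for $\abs{q}<1$, since $\frac{(-q)_n}{(q)_n}\to \frac{(-q)_\infty}{(q)_\infty}$ and $\frac{q^{2n+1}}{1-q^{2n+1}}\to 0$ geometrically, so every rearrangement below is legitimate.

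For the sum of tails I would apply Theorem \ref{thm:AJO} with $t=-q$ and $a=q$. Then $\frac{(t)_n}{(a)_n} = \frac{(-q)_n}{(q)_n}$ and $\frac{(t)_\infty}{(a)_\infty} = \frac{(-q)_\infty}{(q)_\infty}$, so the left-hand side of that theorem is exactly $\sum_{n\ge 0}\left(\frac{(-q)_\infty}{(q)_\infty} - \frac{(-q)_n}{(q)_n}\right)$. The partial-theta-type term $\sum_{n\ge1}\frac{(q/a)_n}{(q/t)_n}(a/t)^n$ vanishes identically, because $q/a=1$ forces $(q/a)_n=(1;q)_n=0$ for every $n\ge 1$. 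The surviving divisor sums simplify after the obvious index shifts using $\frac{q^n}{1-q^n} + \frac{q^n}{1+q^n} = \frac{2q^n}{1-q^{2n}}$, which collapses the bracket to $2\sum_{n\ge1}\frac{q^n}{1-q^{2n}} = 2D_2(q)$, since $\sum_{n\ge1}\frac{q^n}{1-q^{2n}}=\sum_{m\ge1}d_o(m)q^m$. Hence the first group contributes $2\frac{(-q)_\infty}{(q)_\infty}D_2(q)$.

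For the auxiliary series I would expand geometrically, $\frac{q^{2n+1}}{1-q^{2n+1}}=\sum_{k\ge1}q^{k(2n+1)}$, interchange the order of summation, and evaluate the inner sum by the $q$-binomial theorem \eqref{eqn:qbinomial}: taking $a/b=-q$ and $b=q^{2k}$ gives $\sum_{n\ge0}\frac{(-q)_n}{(q)_n}q^{2kn} = \frac{(-q^{2k+1};q)_\infty}{(q^{2k};q)_\infty}$. Writing $(-q^{2k+1};q)_\infty = (-q)_\infty/(-q)_{2k}$ and $(q^{2k};q)_\infty = (q)_\infty/(q)_{2k-1}$ factors out $\frac{(-q)_\infty}{(q)_\infty}$ and leaves $\sum_{k\ge1}q^k\frac{(q)_{2k-1}}{(-q)_{2k}}$; reindexing $k=m+1$ recognizes this precisely as $h(q)$. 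Thus the second group contributes $\frac{(-q)_\infty}{(q)_\infty}h(q)$, which after the overall sign is $-\frac{(-q)_\infty}{(q)_\infty}h(q)$. Adding the two contributions produces $\frac{(-q)_\infty}{(q)_\infty}\left(2D_2(q)-h(q)\right)$, as claimed.

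The main obstacle is not any single deep step but the bookkeeping: spotting that the right splitting is exactly $1/(1-q^{2n+1})=1+q^{2n+1}/(1-q^{2n+1})$, and then matching the reindexed auxiliary series to the precise shape in which $h(q)$ is defined. The vanishing of the partial-theta term via $(1;q)_n=0$ is the small miracle that makes the input of Theorem \ref{thm:AJO} produce a clean divisor sum with no leftover non-modular piece; one should verify that the degenerate specialization $q/a=1$ is admissible there, and if that boundary case is delicate, the same sum of tails can instead be obtained directly from \eqref{eqn:fine6.3} and \eqref{eqn:fine2.4} or by citing the $F_2$ computation.
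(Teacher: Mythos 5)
Your argument is correct, and it is genuinely different from what the paper does: the paper gives no proof of this statement at all, simply quoting it as a theorem of Bringmann--Kane \cite{BK2}, so your write-up supplies a self-contained derivation from identities already present in the text. The two ingredients both check out. The splitting $\frac{1}{1-q^{2n+1}}=1+\frac{q^{2n+1}}{1-q^{2n+1}}$ reduces the first piece to the sum of tails of $(-q)_n/(q)_n$, which is exactly the $\cG[F_2]$ identity of Theorem \ref{thm:modular}(B) (and that part of the paper does not rely on the present statement, so there is no circularity); your direct derivation of it from Theorem \ref{thm:AJO} with $t=-q$, $a=q$ is also right --- the partial theta term dies because $(1;q)_n=0$, and after cancelling the two half-integer constants the four Lambert-type sums collapse to $2\sum_{n\ge1}q^n/(1-q^{2n})=2D_2(q)$. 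For the second piece, the geometric expansion, the interchange (justified by absolute convergence for $|q|<1$), and the $q$-binomial evaluation $\sum_{n\ge0}\frac{(-q)_n}{(q)_n}q^{2kn}=\frac{(-q^{2k+1};q)_\infty}{(q^{2k};q)_\infty}=\frac{(-q)_\infty}{(q)_\infty}\cdot\frac{(q)_{2k-1}}{(-q)_{2k}}$ produce precisely $\frac{(-q)_\infty}{(q)_\infty}h(q)$ after the shift $k=m+1$; I confirmed the resulting identity agrees with a low-order series check. Your caution about the degenerate specialization $q/a=1$ in Theorem \ref{thm:AJO} is well placed but harmless, since every series involved converges there and the identity extends by continuity (or one simply cites the $F_2$ sum of tails directly). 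What your route buys is transparency: the divisor sum $2D_2(q)$ and the series $h(q)$ emerge from two visibly separate sources, whereas the cited reference packages them together; what it does not recover is the arithmetic interpretation $h(q)=-\sum_{\fa\subset\Z[\sqrt2]}(-1)^{N(\fa)}q^{|N(\fa)|}$, for which the paper still defers to \cite{BK}.
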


\begin{proof}[Proof of Theorem \ref{thm:modular}]
We begin by proving the claims of (A).  The first equality follows from 
\eqref{eqn:ramanujanFalse} with $a = -1$.
From \eqref{eqn:fine14.31} we have 
$$ F_2(q) = F(-1, 1,-1) = \frac{1}{2} \( 1+ 2 \sum_{n=1}^\infty (-1)^n q^{n^2}\),$$
which establishes the second claim. 
By \eqref{eqn:rogers} we have 
\begin{align*}
F_3(q) = F(q\cdot q^{-2}, q^2\cdot q^{-2}; q: q^2) =&
 \sum_{n=0}^\infty \frac{(q;q^2)_n (q^2;q^2)_n q^{3n} q^{2n^2 - 2n} (1-q^{4n+2})}{(q^2;q^2)_n (q;q^2)_{n+1}}  \\
=&   \sum_{n=0}^\infty q^{2n^2 + n} (1+q^{2n+1}), 
\end{align*}
which establishes the third claim. 

The fourth claim is more intricate. We have 
\begin{align*}
\sum_{n=0}^\infty \frac{(-1)^n q^{2n+1} (-q)_n}{(q)_n (1-q^{2n+1})} = &
\sum_{m=0}^\infty q^{m+1} \sum_{n=0}^\infty \frac{(-q^{2(m+1)})^n (q)_n}{(-q)_n} \\
=& \sum_{m=0}^\infty q^{m+1} \frac{(q^{2m+3};q)_\infty}{(-q^{2m+2};q)_\infty} \\
=& \frac{(q)_\infty}{(-q)_\infty} \sum_{m=0}^\infty q^{m+1} \frac{(-q;q)_{2m+1}}{(q;q)_{2m+2}}
\end{align*}
where we have used \eqref{eqn:qbinomial} with $b = -q^{2m+2}$ and $a = q^{2m+3}$. 
Continuing in this fashion we see that 
\begin{align}
\sum_{n=0}^\infty \frac{(-1)^n q^{2n+1} (-q)_n}{(q)_n (1-q^{2n+1})} = &
\frac{(q)_\infty}{(-q)_\infty} \frac{1}{2} \sum_{m=0}^\infty q^{m+1} \frac{(-1;q^2)_{m+1} (-q;q^2)_{m+1}}{(q;q^2)_{m+1} (q^2;q^2)_{m+1} } \nonumber \\
=& -\frac{1}{2} \frac{(q)_\infty}{(-q)_\infty} + \frac{1}{2} \frac{(q)_\infty}{(-q)_\infty} 
\sum_{m=0}^\infty q^{m} \frac{(-1;q^2)_{m} (-q;q^2)_{m}}{(q;q^2)_{m} (q^2;q^2)_{m}}\nonumber \\
=& -\frac{1}{2}\frac{(q)_\infty}{(-q)_\infty} + \frac{1}{2} \frac{(q;q)_\infty (-q^2;q^2)_\infty (-q^2;q^4)_\infty^2}{(-q;q)_\infty (q;q^2)_\infty^2} \nonumber
\end{align}
where we have used \eqref{eqn:bailyDaum} with $q\mapsto q^2$, $a = -1$ and $b= -q$. 
Using 
$$(-q;q)_\infty = \frac{1}{(q;q^2)_\infty} = \frac{(q^2;q^2)_\infty}{(q;q)_\infty} \ \ \ 
\text{ and } \ \ \ 
(-q;q^2)_\infty = \frac{(q^2;q^2)_\infty^2}{(q;q)_\infty (q^4;q^4)_\infty}$$
we have 
$$\sum_{n=0}^\infty \frac{(-1)^n q^{2n+1} (-q)_n}{(q)_n (1-q^{2n+1})} = 
\frac{1}{2} \frac{(q)_\infty}{(-q)_\infty} + \frac{1}{2} \frac{(q^4;q^4)_\infty^5}{(q^2;q^2)_\infty^2
(q^8;q^8)_\infty^2}.$$
Now equation (2.2.12) of \cite{andrewsPartitions} gives 
$$\frac{(q)_\infty}{(-q)_\infty} = \sum_{n\in \Z} (-1)^n q^{n^2}.$$  Similarly, Jacobi's identity
gives
$$\frac{(q^2;q^2)^5}{(q;q)_\infty^2 (q^4;q^4)_\infty^2} = \sum_{n\in\Z} q^{n^2}.$$
Applying these gives the result. 

The first three 
claims of (B) follow from Theorem 3 of \cite{AJO}.  In particular, they are special cases of 
Theorem \ref{thm:AJO2}.   The claim for $F_4(q)$ follows from Theorem \ref{thm:BK2}. Moreover, 
the claims about the series $h(q)$ are proven in \cite{BK} (and recalled in \cite{BK2}). 
\end{proof}

\section{Techniques of  Renormalization}\label{sec:Recurrence}

Renormalization is an art.   We have primarily used the results of Andrews, Jimenez, and Ono \cite{AJO}
to establish the results of Section \ref{sec:Results}. 
This section contains some general discussion 
of renormalization techniques and gives a number of additional results concerning the series
studied in Section \ref{sec:Results}. 

\subsection{Some preliminaries about $(\sigma, \sigma^*)$}
For future reference we record some facts about the pair $(\sigma, \sigma^*)$. 
Andrews, Dyson, Hickerson, and Cohen established the following identities for $\sigma$ and $\sigma^*$ in  \cite{andrewsEuler, ADH, cohen}, which allows us to evaluate them for $q$ an arbitrary root of unity
\begin{align}
\sigma(q) = & 1+ \sum_{n=0}^\infty q^{n+1} (q-1) (q^2-1) \cdots (q^n-1) \label{eqn:sigma_rootsOfUnity}\\
\sigma^*(q) =& -2 \sum_{n=0}^\infty q^{n+1} (1-q^2) (1-q^4) \cdots (1-q^{2n}) \label{eqn:sigma*_rootsOfUnity}
\end{align}
Cohen observed that for every root of unity $q$, 
$$
\sigma(q^{-1}) = - \sigma^*(q).
$$
%

\subsection{Formal $q$-series manipulation}
The identities of Andrews, Jimmez, and Ono \cite{AJO} are proved using formal manipulations of 
identities arising from the theory of basic hypergeometric series. 

Andrews and Freitas established the following variants of Theorems \ref{thm:AJO} and \ref{thm:AJO2}. 
\begin{theorem}[Corollary 4.2 and Theorem 4.4 of \cite{AF}]\label{thm:AF}
\begin{equation}\label{eqn:AF_thm_1}
\sum_{n=0}^\infty \( \frac{(t)_n}{(a)_n} - \frac{(t)_\infty}{(a)_\infty}\) = 
\frac{(t)_\infty}{(a)_\infty} \sum_{n=1}^\infty \frac{(a/t)_n}{(q)_n} \frac{t^n}{1-q^n}
\end{equation}
\begin{equation}\label{eqn:AF_thm_2}
\sum_{n=0}^\infty \( \frac{(a)_n (b)_n}{(c)_n (q)_n} - \frac{(a)_\infty (b)_\infty}{(c)_\infty (q)_\infty}\)
= \frac{(a)_\infty (b)_\infty}{(c)_\infty (q)_\infty} \sum_{n=1}^\infty \( \frac{(q/b)_n b^n}{(q)_n} + \frac{(c/a)_n a^n}{(b)_n}\) \frac{1}{1-q^n}
\end{equation}
\end{theorem}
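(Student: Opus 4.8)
The plan is to derive both identities directly from the $q$-binomial theorem \eqref{eqn:qbinomial}, treating each left-hand side as a sum over a \emph{normalized} tail. First I would factor out the limiting value of the summand. For \eqref{eqn:AF_thm_1}, using $\frac{(t)_n (a)_\infty}{(a)_n (t)_\infty} = \frac{(aq^n;q)_\infty}{(tq^n;q)_\infty}$, the $n$-th term becomes $\frac{(t)_\infty}{(a)_\infty}\left(\frac{(aq^n;q)_\infty}{(tq^n;q)_\infty}-1\right)$. Applying \eqref{eqn:qbinomial} with base point shifted by $q^n$ expands the ratio as $\sum_{m\ge 0}\frac{(a/t)_m}{(q)_m}t^m q^{nm}$; subtracting the $m=0$ term, interchanging the sums over $n$ and $m$, and summing the geometric series $\sum_{n\ge 0}q^{nm}=\frac{1}{1-q^m}$ produces exactly the right-hand side. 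This step is routine once the interchange is justified by absolute convergence.

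The second identity \eqref{eqn:AF_thm_2} is the substantive one. The asymmetry of its right-hand side --- one term carrying denominator $(q)_n$ and the other $(b)_n$ --- signals that a symmetric double expansion of the tail will not reproduce it. After factoring out $\frac{(a)_\infty (b)_\infty}{(c)_\infty (q)_\infty}$, the normalized tail is a product $XY$ with $X=\frac{(cq^n;q)_\infty}{(aq^n;q)_\infty}$ and $Y=\frac{(q^{n+1};q)_\infty}{(bq^n;q)_\infty}$, and the key device is the deliberately \emph{asymmetric} split $XY-1=(Y-1)+(X-1)Y$. The piece $\sum_{n\ge 0}(Y-1)$ is handled exactly as above: expanding $Y-1=\sum_{k\ge 1}\frac{(q/b)_k}{(q)_k}(bq^n)^k$ and summing over $n$ gives $\sum_{n\ge 1}\frac{(q/b)_n b^n}{(q)_n(1-q^n)}$, the first right-hand term.

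The main obstacle is the remaining piece $\sum_{n\ge 0}(X-1)Y$, where I would keep $Y$ in closed form rather than expand it. Writing $X-1=\sum_{j\ge 1}\frac{(c/a)_j}{(q)_j}a^j q^{nj}$ and interchanging, the inner sum is
\[ T_j:=\sum_{n\ge 0}q^{nj}\frac{(q^{n+1};q)_\infty}{(bq^n;q)_\infty}=\frac{(q)_\infty}{(b)_\infty}\sum_{n\ge 0}\frac{(b)_n}{(q)_n}q^{nj}, \]
and a second application of \eqref{eqn:qbinomial} gives $\sum_{n\ge 0}\frac{(b)_n}{(q)_n}q^{nj}=\frac{(bq^j;q)_\infty}{(q^j;q)_\infty}$. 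The simplifications $\frac{(q)_\infty}{(q^j;q)_\infty}=(q)_{j-1}$ and $\frac{(bq^j;q)_\infty}{(b)_\infty}=\frac{1}{(b)_j}$ collapse $T_j$ to $\frac{(q)_{j-1}}{(b)_j}$, so that, using $\frac{(q)_{j-1}}{(q)_j}=\frac{1}{1-q^j}$, one obtains $\sum_{n\ge 0}(X-1)Y=\sum_{j\ge 1}\frac{(c/a)_j a^j}{(b)_j(1-q^j)}$, the second right-hand term. The emergence of $(b)_j$ in the denominator --- in place of $(q)_j$ --- is precisely the fingerprint of having summed $Y$ in closed form, and it is what reproduces the asymmetry of \eqref{eqn:AF_thm_2}.

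Finally I would record that all rearrangements are legitimate: for $|q|<1$ with the parameters in a neighborhood where the relevant series converge absolutely the interchanges above are valid, and the resulting identities then hold as identities of meromorphic functions of the parameters (equivalently of formal power series). As a sanity check and an alternative, one could instead deduce \eqref{eqn:AF_thm_1} and \eqref{eqn:AF_thm_2} from Theorems \ref{thm:AJO} and \ref{thm:AJO2} by reconciling the two normalizations of the tail (AJO compares against $\frac{(a)_\infty(b)_\infty}{(q)_n(c)_\infty}$, keeping $(q)_n$ finite, whereas \eqref{eqn:AF_thm_2} compares against the full limit $\frac{(a)_\infty(b)_\infty}{(c)_\infty(q)_\infty}$), but the direct $q$-binomial route above is cleaner and self-contained.
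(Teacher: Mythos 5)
Your argument is correct: I checked both computations. For \eqref{eqn:AF_thm_1}, the identity $\frac{(t)_n(a)_\infty}{(a)_n(t)_\infty}=\frac{(aq^n;q)_\infty}{(tq^n;q)_\infty}$, the expansion via \eqref{eqn:qbinomial} with parameter $a/t$ and argument $tq^n$, and the geometric summation over $n$ all go through. For \eqref{eqn:AF_thm_2}, the asymmetric split $XY-1=(Y-1)+(X-1)Y$ with $Y=\frac{(q^{n+1};q)_\infty}{(bq^n;q)_\infty}$ kept in closed form is exactly the right device: your evaluation $T_j=\frac{(q)_{j-1}}{(b)_j}$ is correct (using $\frac{(q)_\infty}{(q^j;q)_\infty}=(q)_{j-1}$ and $\frac{(bq^j;q)_\infty}{(b)_\infty}=\frac{1}{(b)_j}$), and it is precisely what converts the $(q)_j$ denominator into $(b)_j$ and reproduces the asymmetry of the right-hand side. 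Note, however, that the paper gives no proof of this theorem at all: it imports both identities verbatim from Andrews and Freitas \cite{AF}, whose own derivation runs through an extension of Abel's lemma (comparing $\sum a_n t^n$ against its pole $\frac{L}{1-t}$ as $t\to 1^-$ and identifying the constant term as $\sum(a_n-L)$), applied to two-variable basic hypergeometric transformations. Your route is therefore genuinely different and arguably preferable in this context: it is self-contained, uses only \eqref{eqn:qbinomial}, which the paper already records, and avoids any limiting process in an auxiliary variable. The one thing you should make explicit is the range of absolute convergence justifying the interchanges (e.g.\ $\abs{q}<1$ and $\abs{t}<1$ for the first identity, $\abs{a},\abs{b}$ small for the second, extended afterwards by analytic continuation in the parameters), which you flag but do not pin down; your closing observation that \eqref{eqn:AF_thm_2} can alternatively be reconciled with Theorem \ref{thm:AJO2} matches the remark the paper itself makes immediately after the theorem.
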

\begin{remark}
The second of these is equivalent to Theorem \ref{thm:AJO2} using 
$$\sum_{n=1}^\infty \frac{(q/b)_n b^n}{(q)_n}  = \sum_{n=1}^\infty \frac{q^n}{1-q^n} - \sum_{n=0}^\infty \frac{bq^n}{1-bq^n}.$$
\end{remark}

Direct applications of these series manipulations gives the following renormalizations for the series 
$F_1(q)$, $F_2(q)$ and  $F_3(q)$. 
{\small 
\begin{align*}
\sum_{n=0}^\infty \(F_{1,n}(q^{-1}) - F_{1, \infty}(q^{-1})  \) &= &\sum_{n=0}^\infty \(\frac{1}{(q;q)_n} - \frac{1}{(q;q)_\infty} \) =  \frac{1}{(q;q)_\infty} \sum_{n=1}^\infty \frac{(-1)^n q^{\frac{1}{2} n(n+1)}}{(q;q)_n(1-q^n)} \\
\sum_{n=0}^\infty \(F_{2,n}(q^{-1}) - F_{2, \infty}(q^{-1})  \) &=& 
   \sum_{n=0}^\infty \(\frac{(-q;q)_n}{(q;q)_n} - \frac{(-q;q)_\infty}{(q;q)_\infty} \) =
  \frac{(-q)_\infty}{(q)_\infty} \sum_{n=1}^\infty \frac{(-1)_n (-q)^n}{(q)_n (1-q^n)} \\
&& = \frac{(-q)_\infty}{(q)_\infty} \sum_{n=1}^\infty \frac{(-q)_n}{(q)_n} \frac{(-1)^n}{(1-q^n)} \\
&& =   \frac{(-q)_\infty}{(q)_\infty} \sum_{n=1}^\infty \( \frac{(-1)^n q^{\frac{1}{2}n(n+1)}}{(q;q)_n (1-q^n)}  + \frac{(-1)^n}{(1-q^n)} \)   \\
\sum_{n=0}^\infty \(F_{3,n}(q^{-1}) - F_{3, \infty}(q^{-1})  \) &=& 
\sum_{n=0}^\infty \(\frac{(q;q^2)_n}{(q^2;q^2)_n} - \frac{(q;q^2)_\infty}{(q^2;q^2)_\infty} \) =
\frac{(q;q^2)_\infty}{(q^2;q^2)_\infty} \sum_{n=1}^\infty \frac{(q;q^2)_n q^{n}}{(q^2;q^2)_n (1-q^{2n})}
\end{align*}
}
The first of these identities is obtained directly from either part of Theorem \ref{thm:AF}.  
Together with Theorem \ref{thm:mock} (B) the first identity  gives 
$$\sum_{n=1}^\infty \frac{(-1)^{n+1} q^{\frac{1}{2}n(n+1)}}{(q;q)_n (1-q^n)} = \sum_{n=1}^\infty \frac{q^n}{1-q^n}.$$
The next three lines are obtained from Theorem \ref{thm:AF}.  The first is the obtained from \eqref{eqn:AF_thm_1}
with $t = -q$ and $a = q$, the second is obtained from \eqref{eqn:AF_thm_2} with $a = c = 0$ and 
$b= -1$.  
The final of the three identities for $F_2$ is obtained from \eqref{eqn:AF_thm_2} with 
$a = -1$ and $b = c= 0$. 
The last equality is obtained from Theorem \ref{thm:AF} with $q\mapsto q^{2}$, $t = q$ and $a = q^2$. 
Together with Theorem \ref{thm:modular} (B), the final equality yields 
$$\sum_{n=1}^\infty \frac{(q;q^2)_n q^n}{(q^2;q^2)_n (1-q^{2n})} = \sum_{n=1}^\infty \frac{(-1)^{n-1} q^n}{1-q^n}.$$


\vspace{.1in}
There is  one further aspect of renormalization.  
Consider the example $\sum_{n=0}^\infty \frac{(-q;q)_n}{(q;q)_n}$ associated with $F_1(q^{-1})$.
In the above discussion to get convergence for $\abs{q}<1$  
the series $\sum_{n=0}^\infty \( \frac{(-q)_n}{(q)_n} - \frac{(-q)_\infty}{(q)_\infty}\)$
is introduced.  
However, there are many choices for how to carry out renormalization.  A different, natural choice is 
$$\sum_{n=0}^\infty \frac{1}{(q)_n}\( (-q)_n - (-q)_\infty\).$$ 
To calculate this renormalization the following theorem of 
Andrews and Freistas is useful. 
\begin{theorem}[Theorem 4.1 of \cite{AF}]\label{thm:AF2}
Let $g$ be a functions defined by the series $g(x) = \sum_{n=0}^\infty g_n x^n$. Then 
$$\sum_{n=0}^\infty g_n \( \frac{(t)_n}{(a)_n} - \frac{(t)_\infty}{(a)_\infty}\) = \frac{(t)_\infty}{(a)_\infty}
\sum_{n=1}^\infty \frac{(a/t)_n}{(q)_n} g(q^n) t^n.$$
\end{theorem}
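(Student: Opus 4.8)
The plan is to verify the identity directly by expanding the Taylor coefficients of $g$ on the right-hand side and then collapsing the resulting inner series with the $q$-binomial theorem \eqref{eqn:qbinomial}. Since this section concerns formal $q$-series manipulations, I would first fix $q, t, a$ in a range where all the series converge absolutely, establish the identity there, and then read it off as a formal identity in the coefficients $g_n$.

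First I would insert $g(q^n) = \sum_{k=0}^\infty g_k q^{nk}$ into the right-hand side and interchange the two summations, obtaining
$$\frac{(t)_\infty}{(a)_\infty} \sum_{k=0}^\infty g_k \sum_{n=1}^\infty \frac{(a/t)_n}{(q)_n} (tq^k)^n.$$
The inner sum over $n$ is exactly the series governed by the $q$-binomial theorem \eqref{eqn:qbinomial}, applied with $a \mapsto aq^k$ and $b \mapsto tq^k$, so that $a/b = a/t$ and the full sum from $n=0$ equals $\frac{(aq^k)_\infty}{(tq^k)_\infty}$. Because the sum here begins at $n=1$, I subtract the $n=0$ term, which equals $1$, leaving $\frac{(aq^k)_\infty}{(tq^k)_\infty} - 1$.

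Next I would use the factorizations $(t)_\infty = (t)_k\,(tq^k)_\infty$ and $(a)_\infty = (a)_k\,(aq^k)_\infty$ to simplify the prefactor term by term: the product $\frac{(t)_\infty}{(a)_\infty}\cdot \frac{(aq^k)_\infty}{(tq^k)_\infty}$ collapses to $\frac{(t)_k}{(a)_k}$, while the contribution of the subtracted $1$ keeps the factor $\frac{(t)_\infty}{(a)_\infty}$. This rewrites the right-hand side as
$$\sum_{k=0}^\infty g_k \left( \frac{(t)_k}{(a)_k} - \frac{(t)_\infty}{(a)_\infty}\right),$$
which is precisely the left-hand side after relabeling $k$ as $n$, completing the argument.

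The only real obstacle is bookkeeping rather than depth. I must ensure the double sum converges absolutely so the interchange of summation is legitimate, which holds for $\abs{q}<1$ with $t, a$ chosen so that $\abs{tq^k}<1$ for all $k\ge 0$ (so each application of \eqref{eqn:qbinomial} is valid); on such an open set of parameters the identity holds and then persists formally in the $g_n$. I must also track the $n=0$ versus $n=1$ indexing with care, since it is exactly the omitted $n=0$ term of the $q$-binomial series that produces the subtracted tail $\frac{(t)_\infty}{(a)_\infty}$ on the left-hand side.
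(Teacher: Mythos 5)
Your argument is correct, and it is worth noting that the paper itself offers no proof of this statement: it is imported verbatim as Theorem 4.1 of \cite{AF}, where Andrews and Freitas derive it from their ``extension of Abel's lemma,'' i.e.\ by introducing an auxiliary variable $\tau$ into a two-variable $q$-series transformation and computing $\lim_{\tau\to 1^-}\bigl(\sum_n \alpha_n\tau^n - L/(1-\tau)\bigr)=\sum_n(\alpha_n-L)$. Your route is genuinely different and more elementary: since the identity is linear in the coefficients $g_k$, it suffices to check it for each monomial $g(x)=x^k$, and your computation does exactly that --- the $q$-binomial theorem \eqref{eqn:qbinomial} with $a\mapsto aq^k$, $b\mapsto tq^k$ collapses the inner sum to $\frac{(aq^k)_\infty}{(tq^k)_\infty}-1$, and the telescoping $(t)_\infty=(t)_k(tq^k)_\infty$, $(a)_\infty=(a)_k(aq^k)_\infty$ turns the prefactor into $\frac{(t)_k}{(a)_k}-\frac{(t)_\infty}{(a)_\infty}$, which is the $k$th summand of the left-hand side. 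The bookkeeping of the omitted $n=0$ term producing the subtracted tail is handled correctly, and your convergence remarks ($|q|<1$, $|tq^k|<1$, absolute convergence justifying the interchange, then formal persistence in the $g_n$) are adequate. What the Abel-lemma approach buys is a uniform mechanism generating many sums-of-tails identities at once; what your approach buys is transparency --- it exhibits the theorem as nothing more than the $q$-binomial theorem applied coefficientwise.
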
 
Thus using $\frac{1}{(x)_n} = \sum_{n=0}^\infty \frac{x^n}{(q)_n}$ we obtain 
$$\sum_{n=0}^\infty \frac{1}{(q)_n}\( (-q)_n - (-q)_\infty\) = \frac{(-q)_\infty}{(q)_\infty} \sum_{n=1}^\infty 
\frac{(-1)^n q^n}{1-q^n}.$$
This should be compared to the identity 
$$\sum_{n=0}^\infty \(\frac{(-q)_n}{(q)_n} - \frac{(-q)_\infty}{(q)_\infty} \) =
- 2\frac{(-q)_\infty}{(q)_\infty} \sum_{n=1}^\infty \frac{q^n}{1-q^{2n}} $$
of Theorem \ref{thm:modular}.


\subsection{Chapman's Combinatorial Lemma}
Another construction is the following Lemma of Chapman \cite{chapman2}.
\begin{lemma}\label{lem:chapman}
Let $a_n(q)$ be power series in the indeterminate $q$ with $a_n \to 0$ in the $q$-adic topology, 
then 
$$\sum_{N=0}^\infty \( \prod_{j=1}^\infty (1+a_j(q)) - \prod_{j=1}^N(1+a_j(q))\) = 
\sum_{n=1}^\infty n a_n(q) \prod_{j=1}^{n-1} (1+a_j(q)).$$
\end{lemma}
Chapman used this lemma to give  combinatorial proofs of some of `renormalization' identities. 

Recall, $\sigma(q) = \sum_{n=0}^\infty \sigma_n(q)$ with $\sigma_n(q) = \frac{q^{\frac{1}{2}n(n+1)}}{(-q;q)_n}$ so
$$\sum_{n=0}^\infty \(  \sigma_n(q^{-1})-  \frac{1}{(-q)_\infty} \) = \sum_{n=1}^\infty \frac{nq^n}{(-q)_n}.$$ 
\begin{remark}
Together with 
\begin{align}
\label{eqn:rama1}
\sum_{n=0}^\infty \( \sigma_n(q^{-1}) - \sigma_\infty(q^{-1})\) &= \sum_{n=0}^\infty \(\frac{1}{(-q;q)_n} - \frac{1}{(-q;q)_\infty}\) \\
 &= 
2\sum_{n=1}^\infty \frac{(-1)^{n-1} q^{n^2}}{(q;q^2)_n}
-\frac{1}{(-q;q)_\infty} \(\sum_{n=1}^\infty \frac{q^n}{1-q^n}\) \nonumber
\end{align}
(for the second equality, see for instance (3.27) of \cite{AGL})
 we obtain 
$$\sum_{n=1}^\infty n q^{n}(-q^{n+1};q)_\infty = (-q)_\infty \sigma^*(q) - \sum_{n\ge 1} \frac{q^n}{1-q^n}.$$
This has a nice combinatorial interpretation which will be discussed in a forthcoming work of the authors. 
\end{remark}

Recall, $\sigma^*(q) = \sum_{n=0}^\infty \sigma^*_n(q)$ with 
$\sigma_n^*(q) = \frac{(-1)^{n+1} q^{(n+1)^2}}{(q;q^2)_{n+1}}$ so 
$$\sum_{n=0}^\infty \( \sigma_n^*(q^{-1}) - \frac{1}{(q;q^2)_\infty}\) = \sum_{n=1}^\infty \frac{n q^{2n-1}}{(q;q^2)_n}.$$ 
This is also given in \cite{andrewsEuler}  (see (2.4)).

Recall, $W(q):= \sum_{n=0}^\infty W_n(q)$ with $W_n(q) = (-q)^n \frac{(q;q^2)_n}{(-q^2;q^2)_n}$
so that 
$$\sum_{n=0}^\infty \( W_n(q^{-1}) - \frac{(q;q^2)_\infty}{(-q^2;q^2)_\infty}\) 
= \sum_{n=1}^\infty \frac{nq^{2n-1} (q;q^2)_{n-1}}{(-q^2;q^2)_n} $$

Recall, $M_1(q):= \sum_{n=0}^\infty M_{1,n}(q)$ with $M_{1,n}(q) = \frac{(q)_n}{(-q)_n}$ so that 
$$\sum_{n=0}^\infty \( M_{1,n}(q^{-1}) - \frac{(q)_\infty}{(-q)_\infty}\) = 
-2\sum_{n=1}^\infty \frac{n q^n (q)_{n-1}}{(-q)_n}.$$ 

Recall, $F_1(q):= \sum_{n=0}^\infty F_{1,n}(q)$ with $F_{1, n}(q) = \frac{(-1)^n q^{\frac{1}{2}n(n+1)}}{(q;q)_n}$
so that 
$$\sum_{n=0}^\infty \( F_{1,n}(q^{-1}) - \frac{1}{(q)_\infty}\) = 
\sum_{n=1}^\infty \frac{n q^n}{(q)_n}.$$ 
\begin{remark} Consequentially, 
together with Theorem \ref{thm:modular} this yields 
$$\sum_{n=1}^\infty nq^n(q^{n+1};q)_\infty = \sum_{n=1}^\infty \frac{q^n}{1-q^n}.$$
\end{remark}

Recall, $F_2(q):= \sum_{n=0}^\infty F_{2,n}(q)$ with $F_{2,n}(q) = (-1)^n \frac{(-q)_n}{(q)_n}$
so that 
$$\sum_{n=0}^\infty \( F_{2,n}(q^{-1}) - \frac{(-q)_\infty}{(q)_\infty}\) 
= 2\sum_{n=1}^\infty \frac{nq^{n}(-q)_n}{(q)_n}.$$

Recall, $F_3(q):= \sum_{n=0}^\infty F_{3,n}(q)$ with $F_{3,n}(q) = q^n \frac{(q;q^2)_n}{(q^2;q^2)_n}$
so that 
$$\sum_{n=0}^\infty \( F_{3,n}(q^{-1}) - \frac{(q;q^2)_\infty}{(q^2;q^2)_\infty}\) 
= - (1-q)\sum_{n=1}^\infty \frac{nq^{2n-1}(q;q^2)_{n-1}}{(q^2;q^2)_n}.$$

\begin{remark}
The series $F_4(q)$ does not fit into the framework of Chapman's lemma. 
\end{remark}

\subsection{Recursion}
Zagier proved 
$$\sum_{n=0}^\infty \( (q)_n - (q)_\infty \) = \sum_{n=0}^\infty \( \frac{12}{n}\)  n q^{\frac{n^2-1}{24}}
  + (q)_\infty \sum_{n=1}^\infty \frac{q^n}{1-q^n}.$$
by introducing auxiliary variables and deducing the identity 
$$S(x) := \sum_{n=0}^\infty (x)_{n+1} x^n = \sum_{n=0}^\infty  \(\frac{12}{n}\) x^{\frac{1}{2}(n-1)} q^{\frac{1}{24} (n^2-1)}.$$
This auxiliary identity is proved by showing that both sides satisfy the recurrence 
$S(x) = 1-qx^2 - q^2 x^3 S(qx)$ (see exercise 10 in Chapter 2 of \cite{andrewsPartitions}). 

In this section recursion techniques are used to obtain identities for the renormalizations of 
$\sigma(q)$, $\sigma^*(q)$, and $F_1(q)$ from  Section \ref{sec:Results}.  The examples considered in this section are exactly those from Section \ref{sec:Results} with one $q$-Pochammer symbol. 
We have not considered those series with more than one $q$-Pochammer symbol, 
but it would be interesting to adapt these methods to that situation. 

\begin{theorem}\label{thm:recursion}
\begin{align}
\sum_{n=0}^\infty \(\frac{1}{(-q)_n} - \frac{1}{(-q)_\infty} \) 
=&2  \sum_{n=1}^\infty \frac{(-1)^{n-1} q^{n^2}}{(q^2;q^2)_n} \sum_{j=1}^n \frac{1}{1-q^{2j}}
 - \frac{1}{(-q)_\infty } \sum_{n=1}^\infty \frac{q^n}{1+q^n} \label{eqn:sigma_rec}\\
 \sum_{n=0}^\infty \(\frac{1}{(q;q^2)_n} - \frac{1}{(q;q^2)_\infty} \)
 =& - \sum_{n=1}^\infty \frac{q^{\frac{1}{2} n(n+1)}}{(q)_{2n}} \sum_{j=1}^{2n} \frac{1}{1-q^j}
  + \frac{1}{(q;q^2)_\infty} \sum_{n=1}^\infty \frac{q^{2n-1}}{1-q^{2n-1}}. \label{eqn:sigma^*_rec}\\
  \sum_{n=0}^\infty \(\frac{1}{(q)_n} - \frac{1}{(q)_\infty} \) 
=&-2  \sum_{n=1}^\infty \frac{q^{n^2}}{(q;q)_n^2} \sum_{j=1}^n \frac{1}{1-q^{j}}
 + \frac{1}{(q)_\infty } \sum_{n=1}^\infty \frac{q^n}{1-q^n} \label{eqn:F1_rec}
\end{align}
\end{theorem}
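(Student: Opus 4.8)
The plan is to obtain each identity by differentiating a one-parameter deformation of a product formula at the special value $a=1$; this is precisely the auxiliary-variable mechanism behind Zagier's recursion recalled above, used now to extract the harmonic partial sums. In each case the deformation is a Cauchy/Rogers--Fine-type identity $\sum_{n\ge 0}c_n(a)=\Pi(a)$, where $\Pi(a)$ is an infinite product and $c_n(a)$ carries the parameter both as a power $a^{n}$ and inside a finite Pochhammer symbol $(aq^{e};q^{e})_n$, with $e\in\{1,2\}$. The key point is that the harmonic sums $\sum_{j=1}^n\frac{1}{1-q^{ej}}$ are logarithmic derivatives: one has $\frac{d}{da}\log(aq^{e};q^{e})_n\big|_{a=1}=-\sum_{j=1}^n\frac{q^{ej}}{1-q^{ej}}$ and $\frac{d}{da}\log a^{n}\big|_{a=1}=n$, so, using $n+\sum_{j=1}^n\frac{q^{ej}}{1-q^{ej}}=\sum_{j=1}^n\frac{1}{1-q^{ej}}$, differentiating $c_n(a)$ at $a=1$ returns the base coefficient times $\sum_{j=1}^n\frac{1}{1-q^{ej}}$ (the factor of $2$ in \eqref{eqn:sigma_rec} reflecting a doubled parameter dependence there). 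Simultaneously $\frac{d}{da}\log\Pi(a)\big|_{a=1}$ is a divisor-type $q$-series, which supplies exactly the correction terms $\frac{1}{(-q)_\infty}\sum\frac{q^n}{1+q^n}$, $\frac{1}{(q;q^2)_\infty}\sum\frac{q^{2n-1}}{1-q^{2n-1}}$, and $\frac{1}{(q)_\infty}\sum\frac{q^n}{1-q^n}$ appearing on the right of \eqref{eqn:sigma_rec}, \eqref{eqn:sigma^*_rec} and \eqref{eqn:F1_rec}.

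The cleanest instance is \eqref{eqn:F1_rec}. Here the deformation is the identity $\sum_{n\ge 0}\frac{a^n q^{n^2}}{(q;q)_n(aq;q)_n}=\frac{1}{(aq;q)_\infty}$, which is established by the recursion method: both sides satisfy the first-order $q$-difference equation $f(a)=\frac{1}{1-aq}f(aq)$ and agree at $a=0$. Applying $\frac{d}{da}$ and setting $a=1$ gives the clean identity
\[
\sum_{n\ge 1}\frac{q^{n^2}}{(q;q)_n^2}\sum_{j=1}^n\frac{1}{1-q^j}=\frac{1}{(q)_\infty}\sum_{j\ge 1}\frac{q^j}{1-q^j}.
\]
Substituting this into the right-hand side of \eqref{eqn:F1_rec} collapses it to $-\frac{1}{(q)_\infty}\sum_{n\ge 1}\frac{q^n}{1-q^n}$, which is exactly the value of $\cG[F_1](q)=\sum_{n\ge0}\big(\frac{1}{(q)_n}-\frac{1}{(q)_\infty}\big)$ already recorded in Theorem~\ref{thm:modular}(B). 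Thus \eqref{eqn:F1_rec} is equivalent to the displayed derivative identity together with the known ghost evaluation.

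For \eqref{eqn:sigma_rec} and \eqref{eqn:sigma^*_rec} the same scheme applies with base $q^2$: the products must be $\frac{1}{(-aq;q)_\infty}$ and $\frac{1}{(aq;q^2)_\infty}$, since their logarithmic derivatives at $a=1$ reproduce the stated correction terms, and the finite Pochhammer symbols carrying $a$ must be $(aq^2;q^2)_n$ and $(aq;q)_{2n}$ in order to generate $\sum_{j=1}^n\frac{1}{1-q^{2j}}$ and $\sum_{j=1}^{2n}\frac{1}{1-q^j}$. As before, once the correct deformation is in hand one differentiates, sets $a=1$, and then combines the resulting derivative identity with the previously established sums-of-tails evaluations for $\sigma$ and $\sigma^*$ to reach the exact form displayed.

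The main obstacle is the first step for these last two cases: pinning down and proving the correct parametrized product identities. The placement of $a$, the factor of $2$, and the alternating sign in \eqref{eqn:sigma_rec} are delicate, and each candidate deformation must be verified by exhibiting the first-order $q$-difference equation it satisfies in $a$ --- the analogue of Zagier's $S(x)=1-qx^2-q^2x^3S(qx)$ --- together with its value at $a=0$. Two routine but necessary secondary points are the justification of termwise differentiation of the (absolutely, resp. $q$-adically) convergent series involved, and the bookkeeping needed to match the divisor-type correction terms after combining with the known ghost evaluations.
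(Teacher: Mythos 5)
Your overall mechanism --- introduce a parameter, differentiate at the special value, and read off the harmonic partial sums as logarithmic derivatives --- is exactly the engine behind the paper's proof, and for \eqref{eqn:F1_rec} your argument is complete and correct: the generalized Durfee square identity $\sum_{n\ge0}a^nq^{n^2}/\big((q;q)_n(aq;q)_n\big)=1/(aq;q)_\infty$ is a genuine product identity, its $a$-derivative at $a=1$ evaluates the double sum $\sum_n \frac{q^{n^2}}{(q)_n^2}\sum_{j\le n}\frac{1}{1-q^j}$ in closed form, and combining with the ghost evaluation from Theorem \ref{thm:modular}(B) gives \eqref{eqn:F1_rec}. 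The paper reaches the same identity slightly differently: it weights the sum of tails itself by $x^n$, proves the two-sided identity $\sum_n\big(\frac{1}{(xq)_n}-\frac{1}{(xq)_\infty}\big)x^n=\frac{1}{1-x}\big(\sum_n\frac{x^{2n}q^{n^2}}{(xq)_n^2}-\frac{1}{(xq)_\infty}\big)$ by coupled $q$-difference equations, and extracts the constant term at $x=1-\epsilon$; this makes the result self-contained rather than dependent on the separately proved ghost evaluation, but your route is logically sound for this case.

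The genuine gap is in the other two identities, and it is not merely the bookkeeping you flag at the end: the structural ansatz you propose --- a parametrized \emph{product} identity $\sum_n c_n(a)=\Pi(a)$ with $c_n(a)$ carrying $a^{2n}$ and $(a^2q^2;q^2)_n$ and with $\Pi(a)=1/(-aq;q)_\infty$ --- cannot be realized for \eqref{eqn:sigma_rec}. Any such left-hand side is an even function of $a$, while $1/(-aq;q)_\infty$ is not; and replacing the product by an even one such as $1/(-a^2q;q)_\infty$ forces the divisor-type correction $\sum q^n/(1+q^n)$ to acquire the same factor of $2$ as the Hecke-type sum, contradicting the asymmetric coefficients ($2$ versus $1$) in \eqref{eqn:sigma_rec}. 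A direct check also shows $\sum_n(-1)^na^{2n}q^{n^2}/(a^2q^2;q^2)_n\neq 1/(-aq;q)_\infty$ already at order $q^2$. What actually works --- and what the paper does --- is to deform \emph{both} sides: one introduces $S_\sigma(a,x)=\sum_n x^n/(-a;q)_n$, derives two independent first-order recurrences (one in $x$, one in $a$), specializes $a=xq$ to get $S_\sigma(x)=\frac{1}{1-x^2}\big(1-qx^2S_\sigma(xq)\big)$, and iterates to produce the identity $\sum_n\big(\frac{1}{(-xq)_n}-\frac{1}{(-xq)_\infty}\big)x^n=\frac{1}{1-x}\big(\sum_n\frac{(-1)^nq^{n^2}x^{2n}}{(x^2q^2;q^2)_n}-\frac{1}{(-xq)_\infty}\big)$, whose $\epsilon$-expansion at $x=1-\epsilon$ yields \eqref{eqn:sigma_rec}; the analogous construction with $S_{\sigma^*}(a,x)=\sum_n x^n/(aq;q^2)_n$ gives \eqref{eqn:sigma^*_rec}. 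So the missing ingredient in your proposal is precisely the identification that the deformation must be two-sided (the infinite product appears only as the $\epsilon^0$ term of the bracket, via Euler's $\sum_n(-1)^nq^{n^2}/(q^2;q^2)_n=1/(-q)_\infty$ and $\sum_n q^{n(n+1)/2}/(q)_{2n}=1/(q;q^2)_\infty$), not a one-parameter product formula. Alternatively, your "derivative identity plus known ghost" scheme could be salvaged for \eqref{eqn:sigma_rec} by proving $\sum_{n\ge1}\frac{(-1)^{n-1}q^{n^2}}{(q^2;q^2)_n}\sum_{j=1}^n\frac{1}{1-q^{2j}}=-\tfrac12\sigma^*(q)-\frac{1}{(-q)_\infty}\sum_{n\ge1}\frac{q^{2n}}{1-q^{2n}}$ (the first claim of Theorem \ref{thm:bilateral}), but that evaluation is itself of the same depth as the theorem and is obtained in the paper from Choi's bilateral summation, not from differentiating a product.
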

\begin{proof}
To prove \eqref{eqn:sigma_rec} introduce the two three variable series 
$$S_\sigma(a, x):= \sum_{n=0}^\infty \frac{x^n}{(-a)_n} \ \ \text{ and } \ \ 
S_\sigma(x) := \frac{1}{1+x} S_\sigma(xq,x)  = \sum_{n=0}^\infty \frac{x^n}{(-x)_{n+1}}.$$
The following are easily verified
\begin{align}
&(1-x) S_\sigma(a,x) = 1-\frac{a}{q} \( S_\sigma(a, xq) - 1\) \label{eqn:sigma1} \\
&S_\sigma(a,x) = 1 + \frac{x}{1+a} S(aq, x) \label{eqn:sigma2} \\
& S_\sigma(x) = \frac{1}{1-x^2} \frac{1}{(-xq)_\infty} + \frac{1}{1+x} \sum_{n=0}^\infty \( \frac{1}{(-xq)_n} - 
\frac{1}{(-xq)_\infty}\) x^n \label{eqn:sigma3}
\end{align}
Using \eqref{eqn:sigma1} and \eqref{eqn:sigma2} one obtains 
$$S_\sigma(x) = \frac{1}{1-x^2} \( 1 - qx^2 S_\sigma(xq) \).$$
Combining this with \eqref{eqn:sigma3} yields
$$\sum_{n=0}^\infty \( \frac{1}{(-xq)_n} - \frac{1}{(-xq)_\infty}\) x^n = 
\frac{1}{1-x} \( \sum_{n=0}^\infty \frac{(-1)^n q^{n^2} x^n }{(x^2q^2;q^2)_n} 
- \frac{1}{(-xq)_\infty}\).$$ 
Letting $x = 1-\epsilon$ and picking off the $\epsilon^0$ term of both sides gives the result. 
It is useful to note that 
$$\frac{(-1)^n x^n q^{n^2}}{(x^2q^2;q^2)_n} = \frac{(-1)^n q^{n^2} (1-2n\epsilon)}{(q^2;q^2)_n} 
\prod_{j=1}^n \( 1- \epsilon \frac{q^{2j}}{1-q^{2j}}\)$$ 
and 
$$\frac{1}{(-xq;q)_\infty} = \frac{1}{(-q)_\infty }\( 1+ \epsilon \sum_{j=1}^\infty \frac{q^j}{1+q^j} + O(\epsilon^2)\).$$
\begin{remark}
This yields 
$$\sum_{n=0}^\infty \frac{(-1)^n q^{n^2}}{(q^2;q^2)_n} = \frac{1}{(-q)_\infty}.$$
\end{remark}

\vspace{.1in}
To prove \eqref{eqn:sigma^*_rec} introduce the three variable series 
$$S_{\sigma^*}(a, x) := \sum_{n=0}^\infty \frac{x^n}{(aq;q^2)_n} \ \ \text{ and } \ \ 
 S_{\sigma^*}(x) := S_{\sigma^*}(x,x).$$
 The following are easily verified 
\begin{align}
&(1-x) S_{\sigma^*}(a,x) = 1+ \frac{a}{q} \( S_{\sigma^*}(a, xq^2) - 1\) \\
& S_{\sigma^*}(a,x) = 1 + \frac{x}{1-aq} S_{\sigma^*}(aq^2, x) \\
& S_{\sigma^*}(x) = \sum_{n=0}^\infty \( \frac{1}{(xq;q^2)_n} - \frac{1}{(xq;q^2)_\infty} \) x^n 
+ \frac{1}{1-x} \cdot \frac{1}{(xq;q^2)_\infty} \label{eqn:sigma^*3}
\end{align}
Combining these with $a =x$ yields
$$S_{\sigma^*}(x) = \frac{1}{1-x} \( 1 + \frac{x^2 q}{1-xq} S_{\sigma^*}(xq^2)\).$$
This yields
$$S_{\sigma^*}(x) = \sum_{n=0}^\infty \frac{x^{2n} q^{\frac{1}{2} n(n+1)}}{(x)_{2n+1}}.$$
Together with \eqref{eqn:sigma^*3} yields 
$$ \sum_{n=0}^\infty \( \frac{1}{(xq;q^2)_n} - \frac{1}{(xq;q^2)_\infty} \) x^n 
= \sum_{n=0}^\infty \frac{x^{2n} q^{\frac{1}{2} n(n+1)}}{(x)_{2n+1}}
-  \frac{1}{1-x} \cdot \frac{1}{(xq;q^2)_\infty}.$$
As before, set $x = 1-\epsilon$ and  compare the $\epsilon^0$ term of each side to obtain the desired 
identity. 
\begin{remark}
Notice that 
$$\sum_{n=0}^\infty \frac{q^{\frac{1}{2}n(n+1)}}{(q)_{2n}} = \frac{1}{(q;q^2)_\infty}.$$
\end{remark}

\vspace{.1in} 
To prove \eqref{eqn:F1_rec} introduce the three variable series 
$$S_{F_1}(a, x):= 
 \sum_{n=0}^\infty \frac{x^n}{(a)_n} \ \ \text{ and } \ \ 
S_{F_1}(x) := \frac{1}{1-x} S_{F_1}(xq,x)  = \sum_{n=0}^\infty \frac{x^n}{(x)_{n+1}}.
$$
The following are easily verified
\begin{align}
&(1-x) S_{F_1}(a,x) = 1+\frac{a}{q} \( S_{F_1}(a, xq) - 1\) \label{eqn:F11} \\
&S_{F_1}(a,x) = 1 + \frac{x}{1-a} S_{F_1}(aq, x) \label{eqn:F12} \\
& S_{F_1}(x) = \frac{1}{(1-x)} \frac{1}{(x)_\infty} + \frac{1}{1-x} \sum_{n=0}^\infty \( \frac{1}{(xq)_n} - 
\frac{1}{(xq)_\infty}\) x^n \label{eqn:F13}
\end{align}
Combining the first two we obtain 
$$S_{F_1}(x) = \sum_{n=0}^\infty \frac{x^{2n} q^{n^2}}{(x)_{n+1}^2}.$$
Equating this with \eqref{eqn:F13} yields
\begin{equation}\label{eqn:SF1}
\sum_{n=0}^\infty \( \frac{1}{(xq)_n} - \frac{1}{(xq)_\infty}\) x^n = \frac{1}{1-x}\( \sum_{n=0}^\infty 
\frac{x^{2n} q^{n^2}}{(xq)_n^2} - \frac{1}{(xq)_\infty}\).
\end{equation}
\end{proof}

The series in \eqref{eqn:SF1} has interesting connections with the literature.  The next two remarks 
are based on this observation. 
\begin{remark}
Substituting $x =-1$ yields the following curious identity for Ramanujan's third order mock theta function 
$$f(q) := \sum_{n=0}^\infty\frac{q^{n^2}}{(-q)_n^2} = \frac{1}{(-q)_\infty} + 2 
\sum_{n=0}^\infty \( \frac{1}{(-q)_n} - \frac{1}{(-q)_\infty}\) (-1)^n.$$
This is particularly curious in light of the fact that 
for $\abs{q}<1$, (see \cite{fine}) 
$$f(q) = 2 - \sum_{n=0}^\infty \frac{(-q)^n}{(-q)_n} = 2 - F(0, -1; -q) 
= F(0, -1, -1) = 2 \sum_{n=0}^\infty \frac{(-1)^n}{(-q)_n}$$
where the last equality follows by \eqref{eqn:fine2.4}.  The final 
series appears in the equation above if all occurrences of $1/(-q)_\infty$ are deleted
and is consistent with $\sum_{n=0}^\infty (-1)^n = \frac{1}{2}$. %
\end{remark}
\begin{remark}
The series 
\begin{align*}
\cF(x;q):=&  \frac{1}{(1-x)} \sum_{n=0}^\infty \frac{x^{2n} q^{n^2}}{(xq)_n^2} = (1-x) S_{F_1}(x) \\
 =& 1 + x + (1+q) x^2 + (1+q+2q^2) x^3 + (1+q+2q^2 + 3q^3) x^4 + O(x^5) \\
 =: & 
 \sum_{N=0}^\infty P_N(q) x^N 
 \end{align*}
arises in Chapter 9 of Andrews \cite{andrewsCBMS} (see also Sills \cite{sills}). 
In the context of those works there are three properties that are important.
\begin{enumerate}
\item $\cF(x;q)$ satisfies a first order nonhomogenous  $q$-difference equation. 
\item The polynomials $P_N(q)$ converge to $P(q) := \sum_{n=0}^\infty \frac{q^{n^2}}{(q)_n^2}$, the series
which $\cF(x;q)$ is a two-variable version of. 
\item $\lim_{x\to 1} (1-x) \cF(x;q) = P(q)$. 
\end{enumerate}
The identity \eqref{eqn:SF1} yields the second of these properties easily. 
Define the series $T_N(q)$ by  
$$\sum_{n=0}^\infty \( \frac{1}{(xq)_n} - \frac{1}{(xq)_\infty}\) x^n = \sum_{n=0}^\infty T_N(q) x^n.$$
It is clear that $T_N(q) = O(q^{N})$.  Finally, 
$$\frac{1}{(x)_\infty} = \sum_{N=0}^\infty \frac{1}{(q)_n} x^N.$$
From which it is evident that 
$$\lim_{N\to \infty} P_N(q) = \frac{1}{(q)_\infty} = \sum_{n=0}^\infty \frac{q^{n^2}}{(q)_n^2}.$$

Andrews shows that $P_N(q) = \sum_{j=0}^{N-1} q^j \binom{n-1}{j}_q$ where $\binom{A}{B}_q = \frac{(q)_A}{(q)_{B} (q)_{A-B}}$ is the $q$-binomial coefficient. 
\end{remark}

\vspace{.1in}
The identities  in Theorem \ref{thm:recursion} are similar to those obtained via bilateral summation, 
a formal $q$-series construction. 
Because of the similarities we record the results here. 
\begin{theorem}\label{thm:bilateral}
Recall from \eqref{eqn:sigma*_rootsOfUnity}
$$-\frac{1}{2} \sigma^*(q) = \sum_{n=0}^\infty q^{n+1} (q^2;q^2)_n.$$ For $\abs{q}<1$
\begin{align*}
-\frac{1}{2} \sigma^*(q) =& -\sum_{n=1}^\infty \frac{(-1)^n q^{n^2}}{(q^2;q^2)_n} \sum_{j=1}^n \frac{1}{1-q^{2j}}  + (q;q^2)_\infty  \sum_{n=1}^\infty \frac{q^{2n}}{1-q^{2n}}\\
=& \frac{1}{2} \sum_{n=1}^\infty \frac{(-1)^n q^{n^2}}{(q^2;q^2)_n} \sum_{j=1}^n \frac{q^j}{1-q^j} +  
\frac{(-q)_\infty}{(q)_\infty^2} \( \sum_{n=1}^\infty \frac{q^n}{1-q^n} + \sum_{n\in \Z} \frac{n (-1)^n q^{\frac{1}{2} n(n-1)}}{1+q^{n-1}}\).  
\end{align*}
\end{theorem}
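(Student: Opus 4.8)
The plan is to show that each of the two displayed right-hand sides equals the single convergent series $-\tfrac12\sigma^*(q)=\sum_{n=0}^\infty q^{n+1}(q^2;q^2)_n$ of \eqref{eqn:sigma*_rootsOfUnity}; I treat this series as the common target and establish the two representations separately, using the second equality to certify that they agree. As a global sanity check I would verify consistency with Cohen's relation $\sigma(q^{-1})=-\sigma^*(q)$, which ties the normalizing eta-quotients to the $\sigma$-side identities already recorded in the preliminaries.

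For the first equality the most direct route is the auxiliary-series/$\epsilon$-extraction scheme used to prove Theorem \ref{thm:recursion} (the section's framing via bilateral summation gives the same identity, but the recursion is cleaner). I would introduce a parameter $x$ into the summand, e.g. $T(a,x):=\sum_{n\ge0}x^n(aq^2;q^2)_n$, record the $q$-difference relation $T(a,x)=1+x(1-aq^2)T(aq^2,x)$ obtained by peeling off the $n=0$ term together with the companion relation in $x$, and solve the resulting system to re-express the specialization giving $-\tfrac12\sigma^*(q)$ in the basis $\frac{(-1)^nq^{n^2}}{(x^2q^2;q^2)_n}$. The $x=1$ value of that basis is the normalization $\sum_{n\ge0}\frac{(-1)^nq^{n^2}}{(q^2;q^2)_n}=\frac{1}{(-q)_\infty}=(q;q^2)_\infty$ recorded in the remark after \eqref{eqn:sigma_rec}, and it is exactly this identity that forces the $\epsilon^{-1}$ term to vanish. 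Writing $x=1-\epsilon$ and reading off the $\epsilon^0$ coefficient then turns the $1/(1-x)$ factors into the finite divisor sums: the inner $\sum_{j=1}^n\frac{1}{1-q^{2j}}$ comes from the expansion $\frac{(-1)^nx^nq^{n^2}}{(x^2q^2;q^2)_n}=\frac{(-1)^nq^{n^2}(1-2n\epsilon)}{(q^2;q^2)_n}\prod_{j=1}^n(1-\epsilon\frac{q^{2j}}{1-q^{2j}})$, while the factor $(q;q^2)_\infty\sum_{n\ge1}\frac{q^{2n}}{1-q^{2n}}$ comes from the $\epsilon$-linear term of the relevant infinite product, in the same way the series $\sum_j\frac{q^j}{1+q^j}$ arose there.

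For the second equality I would invoke the bilateral-summation construction proper. Extending a parametrized form of the summand to all $n\in\Z$ and applying Ramanujan's ${}_1\psi_1$ summation produces a closed eta-quotient; splitting the two-sided sum into its $n\ge0$ and $n<0$ parts expresses $-\tfrac12\sigma^*(q)$ as the product $\frac{(-q)_\infty}{(q)_\infty^2}$ times a leftover bilateral tail. Differentiating the ${}_1\psi_1$ identity in its free base parameter $z$ before specializing at $z=-1$ introduces both the factor $n$ (from $\frac{d}{dz}z^n$) and the denominators $1+q^{n-1}$, giving the genuinely two-sided sum $\sum_{n\in\Z}\frac{n(-1)^nq^{\frac12 n(n-1)}}{1+q^{n-1}}$, while the Lambert series $\sum_{n\ge1}\frac{q^n}{1-q^n}$ and the modified inner sum $\tfrac12\sum_{j=1}^n\frac{q^j}{1-q^j}$ come from the same logarithmic-derivative bookkeeping applied to the finite products. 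Equivalently, the two inner divisor sums differ by a manifestly Lambert/theta-type series, and reconciling that difference is precisely what forces the bilateral term to appear.

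The main obstacle is the bilateral step itself: the naive two-sided extension of $q^{n+1}(q^2;q^2)_n$ has a pole at $n=-1$, so the delicate point is to choose the parametrized family (and the value of $z$) for which the bilateral sum genuinely converges and telescopes, and then to match the ${}_1\psi_1$ product against the target eta-quotient $\frac{(-q)_\infty}{(q)_\infty^2}=\frac{(q^2;q^2)_\infty}{(q;q)_\infty^3}$ with the correct power of $q$. Carrying the parameter-differentiation through this specialization while tracking the exact combination of one-sided Lambert series, the inner divisor sum, and the two-sided sum is the computational heart of the argument; the requirement that the two displayed expressions coincide—both equalling $-\tfrac12\sigma^*(q)$—supplies a stringent internal check on all the constants.
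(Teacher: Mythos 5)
Your overall strategy --- introduce a parameter into the summand, obtain a two-variable identity that is singular at the relevant specialization, set the parameter equal to $1-\epsilon$, and read off the $\epsilon^0$ coefficient --- is exactly the paper's, and your description of that extraction step (including the role of the evaluation $\sum_{n\ge0}\frac{(-1)^nq^{n^2}}{(q^2;q^2)_n}=(q;q^2)_\infty$ in killing the $\epsilon^{-1}$ term) is correct. But the paper's proof consists of citing two specific parametrized identities, namely Theorem 1 and Theorem 4 of Choi \cite{choi} (equivalently Entry 3.3.1 and page 67 of \cite{ABLost}), the first being
$$\sum_{n=1}^\infty q^{n} (w^{-1} q^2;q^2)_{n-1} = \frac{1}{1-w^{-1}} \left( - \sum_{n=0}^\infty \frac{(-1)^n w^n q^{n^2}}{(wq^2;q^2)_n} + \frac{(wq;q^2)_\infty (w^{-1}q;q^2)_\infty (q^2;q^2)_\infty}{(wq^2;q^2)_\infty (q;q^2)_\infty} \right),$$
so the real content of your proposal is the claim that you can derive these inputs yourself, and that is where the gap lies. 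For the first equality your proposed route (the recursion scheme of Theorem \ref{thm:recursion} applied to $T(a,x)=\sum_n x^n(aq^2;q^2)_n$) is the step most likely to fail: the needed identity carries the Jacobi-triple-product factor $(wq;q^2)_\infty(w^{-1}q;q^2)_\infty(q^2;q^2)_\infty\big/\bigl((wq^2;q^2)_\infty(q;q^2)_\infty\bigr)$, which is the signature of a bilateral summation, not of the unilateral $q$-difference systems used in Theorem \ref{thm:recursion}. Indeed the paper applies essentially your proposed recursion to numerator-Pochhammer series in the unnumbered theorem immediately following Theorem \ref{thm:bilateral} (via $S_1(a,x)=\sum_n(-a)_nx^n$ and $S_2(a,x)=\sum_n(aq;q^2)_nx^n$), and what comes out is a Rogers--Fine/pentagonal-type expansion in the basis $\frac{(-1)_nq^{(3n^2-n)/2}}{(q)_n}$, not the basis $\frac{(-1)^nq^{n^2}}{(q^2;q^2)_n}$ appearing in the first display. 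So you should not expect your recursion system to close up into the stated identity; the bilateral input is genuinely needed.

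For the second equality your instinct (a ${}_1\psi_1$-type bilateral identity differentiated in a free parameter before specialization) is the right genus of argument: the identity the paper cites is
$$\sum_{n=0}^\infty q^{n+1} (-q)_n (t^{-1}q)_n - \frac{t}{2(1-t)} \sum_{n=0}^\infty \frac{(-t)^n q^{n^2}}{(-q)_n (tq)_n}  = \frac{(-q)_\infty}{(t)_\infty (q)_\infty} \sum_{n\in \Z} \frac{(-t)^n q^{\frac{1}{2} n(n-1)}}{1+q^{n-1}},$$
followed by $t=1-\epsilon$ and $\epsilon^0$-extraction. But you have only sketched the derivation, and the obstacles you yourself name --- convergence and telescoping of the two-sided sum near $n=-1$, and matching the product $\frac{(-q)_\infty}{(t)_\infty(q)_\infty}$, which has a pole at $t=1$ that must be cancelled by a zero of the bilateral sum --- are precisely the steps that would have to be carried out. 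As written, your proposal reduces the theorem to two nontrivial $q$-series identities without establishing either; the paper closes that gap by citation, and for the first identity your proposed replacement for the citation is not merely incomplete but aimed at the wrong kind of identity.
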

\begin{remark}
The first of these results, together with \eqref{eqn:sigma_rec} of Theorem \ref{thm:recursion} establishes Ramanujan's 
identity \eqref{eqn:rama1}.
\end{remark}
\begin{proof}[Proof of Theorem \ref{thm:bilateral}]
By Theorem 1 of Choi \cite{choi} or Entry 3.3.1 of \cite{ABLost}, we have 
$$\sum_{n=1}^\infty q^{n} (w^{-1} q^2;q^2)_{n-1} = \frac{1}{1-w^{-1}} \( - \sum_{n=0}^\infty \frac{(-1)^n w^n q^{n^2}}{(wq^2;q^2)_n} + \frac{(wq;q^2)_\infty (w^{-1}q;q^2)_\infty (q^2;q^2)_\infty}{(wq^2;q^2)_\infty (q;q^2)_\infty} \) $$
(Note that 
$\sum_{n=0}^\infty \frac{(-1)^n q^{n^2}}{(q^2;q^2)_n} = (q;q^2)_\infty = \frac{1}{(-q)_\infty}$.)
Letting $w \to 1$ yields the first result.

Theorem 4 of \cite{choi} or page 67 of \cite{ABLost} 
yields
$$\sum_{n=0}^\infty q^{n+1} (-q)_n (t^{-1}q)_n - \frac{t}{2(1-t)} \sum_{n=0}^\infty 
\frac{(-t)^n q^{n^2}}{(-q)_n (tq)_n}  = \frac{(-q)_\infty}{(t)_\infty (q)_\infty} \sum_{n\in \Z} \frac{(-t)^n q^{\frac{1}{2} n(n-1)}}{1+q^{n-1}}.$$
As above, letting $t = 1-\epsilon$ and taking the $\epsilon^0$ term on each side yields the second claim. 
\end{proof}

\vspace{.1in}
There are two other series which were not considered previously in this paper, but 
we record now.

\begin{theorem} 
For $\abs{q}<1$ 
{\small 
\begin{align}
\sum_{n=0}^\infty \((-q)_n - (-q)_\infty \)  \nonumber 
=&  \sum_{n=0}^\infty \frac{(-1)_n q^{\frac{3n^2-n}{2}}}{(q)_n}  \\ 
  \nonumber & \ \ \  \times \( (1+q^{2n}) \sum_{j=1}^n \frac{2q^j}{q^{2j}-1} - 2q^{2n} 
-3n(1+q^{2n}) - \frac{ (1-q^n)(1+q^{2n})}{2(1+q^n)}\)  \\ & -2 (-q)_\infty \sum_{n=0}^\infty \frac{q^n}{1+q^n}
\\ 
\nonumber  \sum_{n=0}^\infty  \( (q;q^2)_n - (q;q^2)_\infty \)  
  =& \sum_{n=0}^\infty \frac{(-1)^n q^{3n^2} (q;q^2)_n}{(q^2;q^2)_n} \( (1-q^{4n+1}) \sum_{j=1}^{2n} \frac{(-1)^{n+1}}{1-q^{j}} +  (3n+2) q^{4n+1} - 3n\) \\
  & + (q;q^2)_\infty \sum_{n=0}^\infty \frac{q^{2n+1}}{1-q^{2n+1}}.
\end{align}
}
\end{theorem}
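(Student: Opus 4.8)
The plan is to read both identities as \emph{sums of tails} for the partial products $(-q)_n=\prod_{j=1}^n(1+q^j)$ and $(q;q^2)_n=\prod_{j=1}^n(1-q^{2j-1})$, and to prove them by the recursion/auxiliary-variable method already used for Theorem~\ref{thm:recursion} and for Zagier's evaluation of $\sum_{n}((q)_n-(q)_\infty)$. In each case I would introduce a one-parameter deformation $S(x)$ of the relevant sum, arranged so that (i) $S(x)$ satisfies a first-order $q$-difference equation $S(x)=A(x)+B(x)\,S(qx)$ in the first case (and $S(x)=A(x)+B(x)\,S(q^2x)$ in the second), and (ii) the desired sum of tails is recovered as the finite part of $S(x)$ at $x=1$. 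Iterating the difference equation telescopes $S(x)$ into a pentagonal \emph{false/partial theta} series, after which the substitution $x=1-\epsilon$ and extraction of the $\epsilon^0$-coefficient produce the stated right-hand sides, exactly as the device $x=1-\epsilon$ was used in the proof of \eqref{eqn:sigma^*_rec} and of Theorem~\ref{thm:bilateral}.

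Concretely, for the first identity I would build $S(x)$ from the products $(-xq;q)_n$, peeling off a boundary factor and reindexing so as to relate $S(x)$ to $S(qx)$; iterating should yield a closed form whose general term is a constant multiple of $\dfrac{(-1;q)_n\,q^{(3n^2-n)/2}}{(q;q)_n}$ times a monomial $x^{c(n)}$ and a two-term factor of the shape $(1-x^2q^{2n})$. This is the same family as Ramanujan's false-theta identity \eqref{eqn:ramanujanFalse} and the Rogers--Fine expansion \eqref{eqn:rogers}, which I would use to certify the telescoped form rather than re-derive it. For the second identity the analogous object is assembled from $(xq;q^2)_n$ with a base-$q^2$ recurrence, whose telescoped form has general term proportional to $\dfrac{(-1)^n q^{3n^2}(q;q^2)_n}{(q^2;q^2)_n}\,x^{c(n)}(1-x^2q^{4n+1})$; here the cleanest source of the $(q;q^2)_n$ products is the bilateral identity of Choi \cite{choi} / Entry~3.3.1 of \cite{ABLost} already invoked in Theorem~\ref{thm:bilateral}.

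The bridge from $S(x)$ to the sum of tails goes through the splitting $\sum_{n\ge0}(-q)_n x^n=\dfrac{(-q)_\infty}{1-x}+\sum_{n\ge0}\big((-q)_n-(-q)_\infty\big)x^n$, so that our sum is the value at $x=1$ of the regularized generating function, that is, the $\epsilon^0$-term once the simple pole $(-q)_\infty/\epsilon$ has been removed. Expanding the telescoped closed form to first order in $\epsilon$ then produces exactly the two kinds of terms appearing on the right: differentiating the monomials $x^{c(n)}$ gives the linear-in-$n$ contributions ($-3n$, $(3n+2)q^{4n+1}$, and so on), while differentiating the remaining $x$-dependent Pochhammer factors gives their logarithmic derivatives, which assemble into the finite divisor sums $\sum_{j=1}^n\frac{2q^j}{q^{2j}-1}$ and $\sum_{j=1}^{2n}\frac{(-1)^{n+1}}{1-q^j}$. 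The trailing terms $-2(-q)_\infty\sum_n\frac{q^n}{1+q^n}$ and $(q;q^2)_\infty\sum_n\frac{q^{2n+1}}{1-q^{2n+1}}$ arise in the same way from the logarithmic derivative of $(-q)_\infty$, respectively $(q;q^2)_\infty$, in the pole-subtraction term.

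The main obstacle is twofold. First, one must identify the correct deformation $S(x)$: the precise placement of the $x$- and $q$-powers so that the $q$-difference equation both closes and telescopes to a pentagonal false theta, since a slightly wrong choice gives a recurrence with no usable closed form. Second, and more delicate, is the $\epsilon$-bookkeeping: one must check that the $\epsilon^{-1}$ poles cancel exactly against the $(-q)_\infty/(1-x)$ (respectively $(q;q^2)_\infty/(1-x)$) subtraction, and that the several first-order $\epsilon$-derivatives combine into precisely the stated rational prefactors, including the awkward $-\tfrac{(1-q^n)(1+q^{2n})}{2(1+q^n)}$ and the sign $(-1)^{n+1}$. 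This matching is routine in principle but computation-heavy, and is where essentially all the work lies; the underlying false-theta closed forms are already available from \eqref{eqn:ramanujanFalse}, \eqref{eqn:rogers}, and the Choi/\cite{ABLost} bilateral identities, so no genuinely new summation is needed.
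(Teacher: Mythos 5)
Your plan coincides with the paper's own (sketched) proof: the paper introduces exactly the auxiliary two-variable series $S_1(a,x)=\sum_{n\ge0}(-a)_nx^n$ and $S_2(a,x)=\sum_{n\ge0}(aq;q^2)_nx^n$ and proceeds "analogously to the previous theorem," i.e.\ by first-order $q$-difference equations in the deformation parameter, telescoping to a false-theta closed form, and then setting $x=1-\epsilon$ and extracting the $\epsilon^0$-coefficient after cancelling the pole against the $(-q)_\infty/(1-x)$ (resp.\ $(q;q^2)_\infty/(1-x)$) subtraction. Your write-up is, if anything, more detailed about the $\epsilon$-bookkeeping than the paper's one-line sketch, and correctly locates where the real work lies.
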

\begin{proof}[Sketch of proof]
These are proved analogously to the previous theorem using the series 
$$S_1(a,x): = \sum_{n=0}^\infty (-a)_n x^n \ \ \ \text{ and }  \ \ \ S_2(a,x) = \sum_{n=0}^\infty (aq;q^2) x^n.$$
\end{proof}

These should be compared with Ramanujan's identity (see \cite{andrewsEuler})
$$\sum_{n=0}^\infty ( (-q)_n - (-q)_\infty ) = 
-\frac{1}{2} \sum_{n=0}^\infty \frac{q^{\frac{1}{2}n(n+1)}}{(-q)_n} + (-q)_\infty \( \frac{1}{2} - \sum_{n=1}^\infty \frac{q^n}{1-q^n}\)$$
and 
$$\sum_{n=0}^\infty ( (-q)_n - (-q)_\infty ) = \sum_{n=1}^\infty n q^n (-q)_{n-1}$$
obtained from Chapman's Lemma \ref{lem:chapman} (see also (2.3) of \cite{andrewsEuler}). 

The second of these should be compared to 
$$\sum_{n=0}^\infty ((q;q^2)_n - (q;q^2)_\infty) = \sum_{n=1}^\infty \frac{(-1)^{n+1} q^{n^2}}{(q;q^2)_n}
 - (q;q^2)_\infty \sum_{n=1}^\infty \frac{q^n}{1-q^n} = \frac{1}{2} \sigma^*(q)  - (q;q^2)_\infty \sum_{n=1}^\infty \frac{q^n}{1-q^n}$$
 obtained from Theorem \ref{thm:AJO}. 
 Theorem \ref{thm:AJO2} gives 
\begin{align*}
 \sum_{n=0}^\infty ((q;q^2)_n - (q;q^2)_\infty) =& (q;q^2)_\infty \sum_{n=1}^\infty \frac{q^{n}}{(q^2;q^2)_n (1-q^{2n})} \\
  =& (q;q^2)_\infty \(  \sum_{n=1}^\infty \frac{(-1)^{n+1} q^{n}}{1-q^{n}} +  \sum_{n=1}^\infty \frac{q^{2n}}{(q;q^2)_n (1-q^{2n})}\).
\end{align*}

\section{Open Questions and Other Series}\label{sec:Other}
This section contains some additional examples which do not seem to fit as nicely into the theory of Maass waveforms
and mock theta functions. 

\subsection{Other series associated with modular forms and mock modular forms}

Resembling Zagier's strange function 
$F(q) = \sum_{n=0}^\infty (q)_n$ is the series $\sum_{n=0}^\infty (-1)^n (q)_n$. 
By Theorem \ref{thm:AF2} with $a = 0$ and $g_n = (-1)^n$ 
$$\sum_{n=0}^\infty (-1)^n ((q)_n - (q)_\infty) = (q)_\infty \sum_{n=1}^\infty \frac{q^n}{(q)_n (1+q^n)} 
 = q + q^3 - q^4 - q^8 + q^{10} - q^{13} + q^{14} + q^{15} +\cdots$$
 \begin{remark}
 This should be compared to Zagier's series for $\sum_{n=0}^\infty ((q)_n - (q)_\infty)$. 
 \end{remark}

The series on the right hand side is closely related to a famous ``partial theta function.''
Let 
$$G(a;q) := \frac{1}{2} + \sum_{n=1}^\infty \frac{q^n (-q)_{n-1}}{(a^{-1}q^2;q^2)_n}.$$
Note that  for $\abs{q}<1$
$$G(a;q^{-1}) = F(a; q):=  
\sum_{n=1}^\infty \frac{(-a)^n q^{\frac{1}{2} n(n+1)} (-q)_{n-1}}{(aq^2;q^2)_n} = \frac{1}{2} +  \sum_{n=1}^\infty (-a)^n q^{n^2}$$
For $a = 1$ or $a = -1$ the right hand side is a modular form.
Moreover, $$G(1;q) =  \frac{1}{(q)_\infty} \sum_{n=0}^\infty (-1)^n \( (q)_n - (q)_\infty\).$$
\begin{remark}
 Moreover, $G(1;q)$ is the generating function of the number of partitions of $n$ in which the least part is odd. 
 Equivalently, it is the generating function for the number of partitions of $n$ with the largest  part occurring 
 an odd number of times.  This will be discussed in a forthcoming work of the authors \cite{LNR}. 
\end{remark}
 
The series 
$$\sum_{n=0}^\infty (-1)^n (q;q)_n$$ does not need renormalization to continue.  
In fact, by \eqref{eqn:fine2.4} it is given by 
$$\frac{1}{2} \( 1+ \sum_{n=0}^\infty (-1)^n q^{n+1} (q)_n\)$$
(see also the proof of Corollary 4.3 in \cite{cohen}).
Therefore,  to infinite order near every root of unity 
$\sum_{n=0}^\infty (-1)^n ( (q)_n - (q)_\infty)$ agrees with this series, but by 
\eqref{eqn:sigma_rootsOfUnity}, this is $\frac{1}{2} \sigma(q)$. 

\begin{challenge*}
Provide a Hecke-type sum for $\sum_{n=0}^\infty (-1)^n ((q)_n - (q)_\infty)$. 
\end{challenge*}
\begin{challenge*}
Does the series $G(a;q)$ have any modular, mock modular, or quantum modular properties for other specializations of $a$?
\end{challenge*}

Andrews \cite{andrewsOrth} found a number of series related to Ramanujan's third order mock theta function 
$$\psi(q):= \sum_{n=0}^\infty \frac{q^{n^2}}{(q;q^2)_n} = \frac{1}{(q)_\infty} 
\sum_{n=0}^\infty  (-1)^n q^{2n^2+n} (1-q^{6n+6}) \sum_{j=0}^n q^{-\frac{1}{2}j(j+1)}.$$
They are the series 
\begin{align*}
a(q):=& \sum_{n=0}^\infty \frac{q^{n^2} (-q)_n^2}{(q)_{2n}} 
   = \frac{1}{(q)_\infty}  \sum_{n=0}^\infty q^{2n^2+n} (1-q^{6n+6}) \sum_{j=0}^n q^{-\frac{1}{2}j(j+1)}\\
b(q):=& \sum_{n=0}^\infty \frac{q^{n^2} (-q^2;q^2)_n}{(q)_{2n}}  = \frac{(-q;q^2)_\infty}{(q;q^2)_\infty} \psi(-q) \\
c(q):=& \sum_{n=0}^\infty \frac{q^{(n+1)^2 }(-q^2;q^2)_n}{(q)_{2n+1}} = \frac{(-q;q^2)_\infty}{(q;q^2)_\infty} (1-\psi(-q))
\end{align*}
where the second equality in each line is for $\abs{q}<1$. 

The series 
$$\psi(q^{-1}) = \sum_{n=0}^\infty \frac{(-1)^n}{(q;q^2)_n} = F(0, q^{-1}; -1:q^2) = \frac{1}{2} \( 1- \sum_{n=0}^\infty \frac{(-q)^n}{(q;q^2)_{n+1}}\)$$
by \eqref{eqn:fine2.4}.
Finally we have the partial theta identity 
$$\sum_{n=0}^\infty \frac{(-q)^n}{(q;q^2)_{n+1}} = \sum_{n=0}^\infty (-1)^n q^{6n^2+4n} (1+q^{4n+2}).$$
Such identities are typical.  See for example, the work of the Bringmann, Folsom, and the third author \cite{BFR}. 

It is curious to observe that carrying out renormalization we have 
$$\sum_{n=0}^\infty (-1)^n \( \frac{1}{(q;q^2)_n} - \frac{1}{(q;q^2)_\infty}\) = 
\frac{1}{(q;q^2)_\infty} \sum_{n=1}^\infty \frac{(-1)^n q^{n^2}}{(q^2;q^2)_n (1+q^{2n})} =  - q - q^2 - q^3 - q^4 + \cdots - 27q^{19} + \cdots $$
by Theorem \ref{thm:AF2}. 
\begin{challenge*}
Relate the series $$\sum_{n=1}^\infty \frac{(-1)^n q^{n^2}}{(q^2;q^2)_n (1+q^{2n})} = -q+q^4 - q^5 + q^6+q^8 + \cdots 
+ 89q^{54} - 93 q^{55} + 104 q^{56} + \cdots  $$
to the partial theta function appearing in $\psi(q^{-1})$. 
\end{challenge*}

Each of the series 
$a(q^{-1}) = \sum_{n=0}^\infty \frac{(-q)_n^2}{(q)_{2n}}$, $b(q^{-1}) = \sum_{n=0}^\infty \frac{(-q^2;q^2)_n}{((q)_{2n}}$, 
and $c(q^{-1}) = -\sum_{n=0}^\infty \frac{ (-q^2;q^2)_{n}}{(q)_{2n+1}}$ require renormalization. 
\begin{challenge*}
Give explicit formulas for $\sum_{n=0}^\infty \( \frac{(-q)_n^2}{(q)_{2n}}-  \frac{(-q)_\infty^2}{(q)_{\infty}}\)$, 
$\sum_{n=0}^\infty \( \frac{(-q^2;q^2)_n}{((q)_{2n}}  - \frac{(-q^2;q^2)_\infty}{(q)_\infty}\)$, 
and $\sum_{n=0}^\infty \( \frac{ (-q^2;q^2)_{n}}{(q)_{2n+1}} - \frac{(-q^2;q^2)_\infty}{(q)_\infty}\)$. 
What relationship do these series have with partial theta functions and mock theta functions?
\end{challenge*}
Andrews proves 
$$b(q) - d(q) = -c(q)$$
where $d(q):= \sum_{n=0}^\infty \frac{q^{n^2} (-q^2;q^2)_n}{(q)_{2n+1}} = \frac{(-q;q^2)_\infty}{(q;q^2)_\infty}$
The series $d(q^{-1})$ converges for $\abs{q}<1$. Moreover, 
$$d(q^{-1}) = \sum_{n=0}^\infty \frac{q^{2n+1} (-q^2;q^2)_n}{(q)_{2n+1}} = 0.$$

\subsection{Other renormalized series}

The  crank of an integer partition $\lambda$, denoted $c(\lambda)$,  is defined as 
$$c(\lambda):= \begin{cases} \lambda_1 & X_1(\lambda) =0 \\ \omega(\lambda) - X_1(\lambda) & X_1(\lambda) >0 \end{cases}$$
where $\lambda_1$ is the largest part of $\lambda$, $X_1(\lambda)$ is the number of parts of size $1$ in $\lambda$, 
and $\omega(\lambda)$ is the number of parts in $\lambda$ that are strictly larger than $X_1(\lambda)$. 
For $n >1$ let $M(m,n)$ be the number of partitions of $n$ with crank equal to $m$ and set 
$$M(m,n) = \begin{cases} -1 & (m,n) = (0,1) \\ 1 & (m,n) = (0,0), (1,1), (-1, 1) \\ 0  & n \le 1 \text{ otherwise} \end{cases}.$$
The crank generating function is $$C(z;q) = \sum_{n\ge 0} \sum_{m\in \Z} M(m,n) z^m q^n = \frac{(q)_\infty}{(zq)_\infty (z^{-1}q)_\infty}.$$

From this (see also Garvan \cite{garvanKrank}) one may deduce that 
$$\sum_{n=0}^\infty \sum_{m>0} M(m,n) q^n = \frac{1}{(q)_\infty} \sum_{n=0}^\infty (-1)^n q^{\frac{1}{2} n(n+1)}.$$
The series on the right is a partial theta function and has a well known asymptotic expansion radially toward every root 
of unity. 

Moreover, 
$$ cr(q):= \sum_{n=0}^\infty \frac{q^{n(n+1)}}{(q)_n^2} = \frac{1}{(q)_\infty} \sum_{n=0}^\infty (-1)^n q^{\frac{1}{2} n(n+1)}.$$
The series $cr(q^{-1}) = \sum_{n=0}^\infty \frac{1}{(q)_n^2}$ requires renormalization.  
The renormalized series 
$$\sum_{n=0}^\infty \( \frac{1}{(q)_n^2} - \frac{1}{(q)_\infty^2}\) = \frac{1}{(q)_\infty^2}  \( \sum_{n=1}^\infty \frac{q^n}{1-q^n} + \sum_{n=1}^\infty \frac{(-1)^{n-1} q^{\frac{1}{2} n(n+1)}}{1-q^n}\).$$
This series also has an interesting relationship with the crank generating function. 

The moments of the crank generating function have proven to be  combinatorially very interesting.   See the works of 
Andrews \cite{andrewsInvent, andrewsSpt, ack}, Garvan \cite{garvanSpt}, and Dixit and Ye \cite{dy}. 
Andrews, Chan and Kim \cite{ack} showed the first odd moment 
$$C_1(q):= \sum_{n\ge 0} \sum_{m\ge 0 } m M(m,n) q^n = \frac{1}{(q)_\infty} \sum_{n=1}^\infty \frac{(-1)^{n+1} q^{\frac{1}{2} n(n+1)}}{1-q^n}.$$

\vspace{.1in} 

Finally, we mention a series whose arithmetic is curious. 
Lovejoy \cite{lovejoyBailey} defined the series 
$$L_6(q):= \sum_{n=0}^\infty \frac{(q)_n^2 (-q)^n}{(q;q^2)_{n+1}} = \sum_{n=0}^\infty \sum_{r \le n} 
(-1)^{n+r} (2r+1) q^{n^2+ 2n - \frac{1}{2} r(r+1)}.$$
The series $L_6(q^{-1} ) = - \sum_{n=0}^\infty \frac{(q)_n^2}{(q;q^2)_{n+1}}$ requires renormalization and is 
given by 
$$\sum_{n=0}^\infty \( \frac{(q)_n^2}{(q;q^2)_{n+1}} - \frac{(q)_\infty^2}{(q;q^2)_\infty} \) = 
2q+ 5q^2 + 2q^3 + 2q^4 - 5q^5 - q^6 - 6q^7 - 2q^8 + 7q^9 - 4q^{10}+ 7q^{11} + \cdots $$
\begin{challenge*}
Find a $q$-hypergeometric expression for this renormalized sum. 
\end{challenge*}
\begin{challenge*}
Is there a modular object associated with these series?
\end{challenge*}

\vspace{.2in}
\subsection{A Final Challenge}
There remains a final challenge which is the most mysterious and
interesting.  Is there any structure to the series which are chosen as
$\cG[H](q)$?  In the work of Bringmann, Folsom, and the third author
on mock theta functions of weight $1/2$, the series $\cG[H](q)$ are
always equal to a modular form times a partial theta function.  In
this work we often have a modular form times a divisor function.  Can
the space of ghost terms be characterized?


\begin{thebibliography}{GKZ}


\bibitem{ABL} C. Alfes, K. Bringmann, J. Lovejoy, \emph{Automorphic properties of generating functions for generalized odd rank moments and odd Durfee symbols.} 
Math. Proc. Cambridge Philos. Soc. 151 (2011), no. 3, 385--406.


\bibitem{andrewsLost}  G. E. Andrews, \emph{An introduction to Ramanujan's ``lost'' notebook}.  Amer. Math. Monthly {86} (1979), 89-108.



\bibitem{ABLost} G. E. Andrews and B. C. Berndt, \emph{Ramanujan's Lost Notebook.  Part II.} Springer, New York, 2009.

\bibitem{andrewsEuler} G. E. Andrews, \emph{Ramanujan's ``Lost'' Notebook V: Euler's Partition 
Identity}, Adv. Math. {\bf 61} (1986), 156--164.



\bibitem{andrewsCBMS} G. E. Andrews, \emph{$q$-series: their development and application in analysis, number theory, combinatorics, physics, and computer algebra},
CBMS Regional Conference Series in Mathematics, 66. Published for the Conference Board of the Mathematical Sciences, Washington, DC; by the American Mathematical Society, Providence, RI, 1986. xii+130 pp.

\bibitem{andrewsPartitions} G. E. Andrews, \emph{The theory of partitions}, Cambridge University Press, Cambridge, 1998.

\bibitem{andrewsInvent} G. E. Andrews, \emph{Partitions, Durfee symbols, and the Atkin-Garvan moments of ranks}, Inventiones Math. {\bf 169} (2007), 37--73.

\bibitem{andrewsSpt} G. E. Andrews, \emph{The number of smallest parts in the partitions of $n$}, J. Reine Angew. 
Math., {\bf 624} (2008), 133--142. 

\bibitem{andrewsOrth} G. E. Andrews, \emph{$q$-Orthogonal polynomials, Rogers-Ramanujan identities, and mock theta functions},  
Proceedings of the Steklov Institute Dedicated to the 75th Birthday of A. A. Karatsuba, {\bf 276} (2012), 21--32.

\bibitem{ABII} G. E. Andrews and B. Berndt, \emph{
Ramanujan's lost notebook. Part II.} Springer, New York, 2009. 

\bibitem{ack} G. E. Andrews, S. H. Chan, and B. Kim, \emph{The odd moments of ranks and cranks},   
J. Combin. Theory Ser. A   {\bf 120}  (2013), 77--91.

\bibitem{ADH} G. E. Andrews, F. J. Dyson, and D. Hickerson, 
   \emph{Partitions and indefinite quadratic forms}, Invent. Math. {\bf 91} (1988), 391--407. 

\bibitem{AF} G. E. Andrews and P. Freitas, 
   \emph{Extension of Abel's Lemma with $q$-series implications},  The Ramanujan J. {\bf 10} (2005), 137--152. 

%

\bibitem{AGL} G.E. Andrews, F.G. Garvan and J.L. Liang, 
\emph{Self-conjugate vector partitions and the parity of the spt-function}, Acta Arith., accepted for publication. 



\bibitem{AJO} G. E. Andrews, J. Jim\'enez-Urroz, K. Ono, \emph{$q$-series identities and values of 
certain $L$-functions}, Duke Math. J. {\bf 108} (2001), no. 3, 395--419.




\bibitem{BY} B. C. Berndt and A. Yee, \emph{Cominatorial proofs of identities in Ramanujan's 
lost notebook associated with the Rogers-Fine identity and false theta functions}, 
Annals of Combinatorics {\bf 7} (2003), 409--423. 



\bibitem{BFO} K. Bringmann, A. Folsom, K. Ono, 
\emph{$q$-series and weight 3/2 Maass forms}, Compos. Math. {\bf 145} (2009), no. 3, 541--552.

\bibitem{BFR} K. Bringmann, A. Folsom, R. C. Rhoades,
 \emph{Partial and mock theta functions as $q$-hypergeometric series}, 
 The Ramanujan Journal, special issue ``Ramanujan's 125th anniversary special volume'',  
 {\bf 29}, 1-3 (2012), 295--310.

\bibitem{BK} K. Bringmann and B. Kane, \emph{Multiplicative q-hypergeometric series arising from real quadratic fields}
Transactions of the American Mathematical Society {\bf 363}  (2011), 2191--2209.
 
 \bibitem{BK2} K. Bringmann and B. Kane, \emph{New identities involving sums of the tails related 
 to real quadratic fields}, Ramanujan J. {\bf 23} (2010), no. 1-3, 243--251.
 
%

\bibitem{BL1} K. Bringmann and J. Lovejoy, 
\emph{Dyson's rank, overpartitions, and weak Maass forms.} 
Int. Math. Res. Not. IMRN 2007, no. 19, Art. ID rnm063, 34 pp.

\bibitem{BL2}  K. Bringmann and J. Lovejoy, \emph{Overpartitions and class numbers of binary quadratic forms.} Proc. Natl. Acad. Sci. USA 106 (2009), no. 14, 5513--5516. 

\bibitem{BM} K. Bringmann and K. Mahlburg, \emph{Asymptotic inequalities for positive crank and rank moments}, to appear in Transactions of the American Mathematical Society. . 

%



%

\bibitem{BOPR} J. Bryson, K. Ono, S. Pitman, and R. C. Rhoades,
\emph{Unimodal sequences and quantum and mock modular forms},
Proc. Natl. Acad. Sci. USA, to appear.


\bibitem{chapman2} R. Chapman, \emph{Combinatorial proofs of $q$-series identities}, J. Combin. Theory Ser. A {\bf 99} (2002), no. 1, 1--16. 

\bibitem{CR} B. Chern and R. C. Rhoades, \emph{The Mordell integral and Quantum Modular Forms}, preprint.

\bibitem{cj} W. Y. Chen and K. Q. Ji, 
\emph{Weighted forms of Euler's theorem},
J. Combin. Theory Ser. A {\bf 114} (2007), no. 2, 360--372. 

\bibitem{choi} Y.-S. Choi, \emph{The basic bilateral hypergeometric series and the mock theta functions,} Ramanujan J. \textbf{24} (2011), 345-386.

\bibitem{cohen} H. Cohen, \emph{$q$-identities for Maass waveforems}, Invent. Math. {\bf 91} (1988), 409--422. 



\bibitem{DMZ} A. Dabholkar, S. Murthy, D. Zagier, \emph{Quantum Black Holes, Wall Crossing, and 
Mock Modular Forms}, preprint. 

\bibitem{dy} A. Dixit and A. J. Ye, \emph{Generalized higher order spt-functions}, 
Ramanujan J. {\bf 31} (2013), no. 1-2, 191--212. 


\bibitem{dysonGarden} F. J. Dyson, \emph{A walk through Ramanujan's garden}, 
Ramanujan Revisited (G.E. Andrews, R. A. Askey, B.C. Berndt, K.G. Ramanathan, and R.A. Rankin, eds), 
Academic Press, Boston (1988), 7--28. 



\bibitem{fine} N. J. Fine, \emph{Basic hypergeometric series and applications},
Math. Surveys and Monographs, no. 27, Amer. Math. Soc., Providence, 1988.


\bibitem{FOR} A. Folsom K. Ono, and R. C. Rhoades, \emph{$q$-series and quantum modular forms}, 
to appear Forum of Mathematics, Pi.

%


\bibitem{garvanKrank} F. G. Garvan, \emph{Generalizations of Dyson's rank and non-Rogers-Ramanujan partitions}, Manuscripta Math. {\bf 84} (1994), no. 3-4, 343--359.

\bibitem{garvanSpt} F. G. Garvan, \emph{Higher order sot-functions}, Adv. Math. {\bf 228} (1) (2011), 241--265. 



\bibitem{GR} G. Gasper and M. Rahman,\emph{ Basic hypergeometric series}, 
With a foreword by Richard Askey. Second edition. 
Encyclopedia of Mathematics and its Applications, 96. Cambridge University Press, Cambridge, 2004. xxvi+428 pp.




\bibitem{hikami} K. Hikami, \emph{Mock (False) Theta Functions as Quantum Invariants},  
Regul. Chaotic Dyn. 10 (2005), no. 4, 509--530.

\bibitem{hm} D. Hickerson and E. R. Mortenson, 
\emph{Hecke-type double sums, Appell-Lerch sums, and mock theta functions (I)}, preprint. 

%

 




\bibitem{LZ} R. Lawrence and D. Zagier, \emph{Modular Forms and Quantum Invariants of 3-manifolds}, Asian J. Math. {\bf 3} (1999).

\bibitem{LewisZagier} J. Lewis and D. Zagier, 
\emph{Period functions for Maass wave forms. I.} Ann. Math. {\bf 153} (2001), 
191--258. 

\bibitem{lnrPart1} Y. Li, H. Ngo, R. C. Rhoades, \emph{Renormalization and quantum modular forms, Part I: Maass wave forms}, 
in preparation. 

\bibitem{LNR} Y. Li, H. Ngo, R. C. Rhoades, \emph{Integer partitions, probabilities, and quantum modular forms}, 
in preparation. 

\bibitem{lovejoy} J. Lovejoy, \emph{Overpartitiosn and real quadratic fields}, 
J. Number Theory {\bf 106} (2004), 178--186. 

\bibitem{lovejoyBailey} J. Lovejoy,  \emph{A Bailey lattice}, Proc. Amer. Math. Soc. {\bf 132} (2004), no. 5, 1507--1516. 



\bibitem{mortenson} E. T. Mortenson, \emph{On the dual nature of partial theta functions and Appell-Lerch sums}, preprint. 
arxiv: 1208.6316. 

\bibitem{mortenson2} E. T. Mortenson, \emph{Ramanujan's radial limits and mixed mock modular bilateral $q$-hypergeometric series}, preprint. arxiv: 1309.4162.





\bibitem{Ono3} K. Ono, \emph{Unearthing the visions of a master: harmonic Maass forms and number theory},
Proc. 2008 Harvard-MIT Current Developments in Mathematics Conf., (2009), Somerville,
Ma., 347-454.


\bibitem{rogers} L. Rogers, \emph{On two theorems of combinatory analysis and some allied 
identities}, Proc. London Math. Soc. (2) {\bf 16} (1917, 315--336. 

\bibitem{rs} L. Rolen and R. P. Schneider, \emph{A ``strange'' vector-valued quantum modular form.}
in press, Archiv der Mathematik. 






\bibitem{sills} A. V. Sills, \emph{Finite Rogers-Ramanujan Type Identities,} Elec. J. of Comb. {\bf 10} (2003), \#R13. 



\bibitem{watson} G. N. Watson, \emph{The final problem: An account of the mock theta
functions}, J. London Math. Soc. \textbf{2} (2) (1936), 55-80.



\bibitem{zagierStrange} D. Zagier, \emph{Vassiliev invariants and a strange identity related to the Dedekind eta-function}.
Topology {\bf 40} (2001), no. 5, 945--960.

\bibitem{zagierBourbaki} D. Zagier, \emph{Ramanujan's mock theta functions and their
applications [d'apr\'es  Zwegers and Bringmann-Ono],}
S\'em. Bourbaki (2007/2008), Ast\'erisque, No. 326, Exp. No. 986, vii-viii, (2010), 143-164.

\bibitem{zagierQuantum} D. Zagier, \emph{Quantum modular forms}, 
Quanta of maths, 659--675, 
Clay Math. Proc., 11, Amer. Math. Soc., Providence, RI, 2010. 


\bibitem{zudilin} W. Zudilin, \emph{On three theorems of Folsom, Ono and Rhoades}, preprint. arxiv:1309.2053

\bibitem{zwegers1} S.  Zwegers, \emph{Mock $\vartheta$-functions
and real analytic modular forms}, $q$-series with applications to
combinatorics, number theory, and physics (Ed. B. C. Berndt and K.
Ono), Contemp. Math. \textbf{291}, Amer. Math. Soc., (2001),
269-277.

\bibitem{zwegers} S. Zwegers,  \emph{Mock theta functions},
Ph.D. Thesis (Advisor: D. Zagier), Universiteit Utrecht, (2002).




\end{thebibliography}
\end{document}